\theoremstyle{plain}
\newtheorem{theorem}{Theorem}[section]
\newtheorem{lemma}[theorem]{Lemma}
\newtheorem{proposition}[theorem]{Proposition}
\theoremstyle{definition}
\newtheorem{definition}[theorem]{Definition}
\newtheorem{remark}[theorem]{Remark}
\numberwithin{equation}{section}
\title[Boundary null controllability of the heat equation] 
      {Boundary null controllability of the heat equation with Wentzell boundary condition and Dirichlet control}
\author{S.E. Chorfi}
\author{M.I. Ismailov}
\author{L. Maniar}
\author{I. Öner}
\address{S.E. Chorfi, Faculty of Sciences Semlalia, Cadi Ayyad University, LMDP, UMMISCO (IRD-UPMC), Marrakesh, Morocco}
\email{s.chorfi@uca.ac.ma}
\address{L. Maniar, Faculty of Sciences Semlalia, Cadi Ayyad University, LMDP, UMMISCO (IRD-UPMC), Marrakesh and University Mohammed VI Polytechnic, Vanguard Center, Benguerir, Morocco}
\email{maniar@uca.ac.ma}
\address{M.I. Ismailov, I. Oner, Department of Mathematics, Gebze Technical University, 41400 Gebze-Kocaeli, Turkey and Center for Mathematics and its Apllications, Khazar University, Azerbaijan}
\email{mismailov@gtu.edu.tr}
\email{ioner@gtu.edu.tr}
\subjclass[2020]{93B05, 93C20, 42A70}
 \keywords{Heat equation, Wentzell condition, null controllability, moment method, Hilbert Uniqueness Method}
\begin{document}
\begin{abstract}
We consider the linear heat equation with a Wentzell-type boundary condition and a Dirichlet control. Such a boundary condition can be reformulated as one of dynamic type. First, we formulate the boundary controllability problem of the system within the framework of boundary control systems, proving its well-posedness. Then we reduce the question to a moment problem. Using the spectral analysis of the associated Sturm-Liouville problem and the moment method, we establish the null controllability of the system at any positive time $T$. Finally, we approximate minimum energy controls by a penalized HUM approach. This allows us to validate the theoretical controllability results obtained by the moment method.
\end{abstract}

\dedicatory{\large Dedicated to the memory of Professor Hammadi Bouslous}

\maketitle

\section{Introduction}
Let $T>0$ be the time horizon. In this article, we study the boundary null controllability problem of heat equation with a Wentzell boundary condition: 
\begin{equation}\label{eq:1}
\begin{cases}
u_{t}(x,t)=u_{xx}(x,t),\quad \text{ in }  D_T, \\ 
u(0,t)=f(t),\quad 0<t<T, \\ 
au_{xx}(1,t)+du_{x}(1,t)-bu(1,t)=0,\quad 0<t< T, \\ 
u(x,0)=u_0(x),\,  \text{ in }  \Omega, \; u(1,0)=u_{0,1}, 
\end{cases}
\end{equation}
where $D_T=\Omega\times (0,T)$, $\Omega=(0, 1),$ $u_0(x)\in L^2(\Omega),$ $u_{0,1} \in \mathbb R,$ are the initial states, $f(t)\in L^2(0,T)$ is the control function, and $a,b,d$ are given numbers such that $ad>0$. Our approach relies on the moment method where the main difficulty comes from the Wentzell boundary condition $\eqref{eq:1}_3$ that includes the eigenvalue parameter. Note that by using $\eqref{eq:1}_1$ for a regular solution, the Wentzell condition $\eqref{eq:1}_3$ takes the form
$$au_{t}(1,t)+du_{x}(1,t)-bu(1,t)=0,$$
which is a dynamic boundary condition involving the time derivative on the boundary.

Boundary conditions of Wentzell type originate independently from the works of Wentzell \cite{We56} and Feller \cite{Fe57} in the context of generalized differential operators of second order and the link with Markov processes. A more general setting was considered in \cite{We59}. There has been significant interest in evolution equations with Wentzell and dynamic boundary conditions in recent years. These equations have found applications in various mathematical models, including population dynamics, heat transfer and continuum thermodynamics. They are particularly relevant when considering diffusion processes occurring at the interface between a solid and a moving fluid. References \cite{Farkas:2011, Langer:1932, Elst:2013, Go19} and the cited literature provide further insights into these applications. For a comprehensive understanding of the physical interpretation and derivation of such boundary conditions, the reader could refer to the seminal paper \cite{Goldstein:2006}.
 
Although the null controllability of parabolic equations with static boundary conditions (Robin, Neumann, Dirichlet, periodic, semi-periodic) has been extensively studied, see, e.g. \cite{Emanuilov:95, Guo:95, Martin:2016, Rousseau:2007, Cara:2006, AmmarKhodja:2017, Ismailov:2023a, Oner:2023}, there has been a recent focus on the null controllability of parabolic systems with dynamic boundary conditions. This topic has been investigated in recent studies, including \cite{Kumpf:2004, Maniar:2017, Khoutaibi:2020, Chorfi:2023}. Kumpf and Nickel \cite{Kumpf:2004} presented abstract results in which they applied semigroup theory to obtain approximate boundary controllability of the one-dimensional heat equation with dynamic boundary conditions and boundary control. Later, Maniar et al. \cite{Maniar:2017} established the null controllability of multi-dimensional linear and semilinear heat equations with dynamic boundary conditions of surface diffusion type. Their approach involves introducing a new Carleman estimate specifically designed to address these particular boundary conditions. In \cite{Khoutaibi:2020}, the authors considered the heat equation in a bounded domain with a distributed control supported on a small open subset, subject to dynamic boundary conditions of surface reaction-diffusion type involving drift terms in bulk and on the boundary. They proved that the system is null controllable at any time, relying on new Carleman estimates designed for this type of boundary conditions. Furthermore, Khoutaibi et al. \cite{Khoutaibi:2022} investigated the null controllability of the semilinear heat equation with dynamic boundary conditions of the surface reaction-diffusion type, with nonlinearities involving drift terms. Chorfi et al. \cite{Chorfi:2023} focused on the boundary controllability of heat equation with dynamic boundary conditions. They proved that the equation is null controllable at any time using a boundary control supported on an arbitrary sub-boundary. The main result was established through a new boundary Carleman estimate and regularity estimates for the adjoint system. Additionally, Mercado and Morales have recently studied the exact controllability of a Schrödinger equation with dynamic boundary conditions in \cite{Mercado:2023}.

In contrast to the studies above, in this article, we focus on the boundary null controllability of the heat equation with a Wentzell boundary condition and a Dirichlet control by applying the moment method \cite{Fattorini:71}, \cite{Fattorini:74}. Although this method proves to be highly effective in solving such problems for one-dimensional systems, it necessitates a comprehensive understanding and proficiency in the spectral properties of the system. Moreover, we propose a constructive algorithm to compute approximate controls of minimum energy by adapting the penalized Hilbert Uniqueness Method (HUM) with a Conjugate Gradient (CG) scheme.

The paper is structured as follows. The well-posedness of the boundary control system is presented in Section \ref{sec2}. The boundary null controllability problem is reduced to an equivalent problem in Section \ref{sec3}. Then we present the null controllability proof by the moment method. 
In Section \ref{sec5}, we propose an algorithm to approximate the controls and perform some numerical simulations to endorse our theoretical results. Finally, in Section \ref{sec6}, we give a conclusion and some perspectives.

\section{Well-posedness}\label{sec2}
We shall study the well-posedness of the boundary control problem \eqref{eq:1}. Here, we borrow our terminology from \cite{TW09} and \cite{Ou'05}. We consider the Hilbert space $H=L^2(\Omega,\mathrm{d}x) \oplus \mathbb{R}$ with the inner product defined as follows:
$$
((u,u_1), (v,v_1))_H=\int_0^1 u v d x+\frac{a}{d} u_1 v_1 .
$$
One can identify the space $H$ with the space $L^2(\Omega_1,\mathrm{d}\nu)$ isomorphically, where $\Omega_1:=(0,1]$ and $\mathrm{d}\nu:=\mathrm{d}x \oplus \dfrac{a}{d} \mathrm{d}\delta_1$, $\delta_1$ is the Dirac measure at $x=1$.

System \eqref{eq:1} can be rewritten as the following boundary control system
\begin{equation}\label{cp}
\left\{\begin{array}{l}
U^{\prime}(t)=\mathcal{L} U(t)\\
\mathcal{G}U(t)=f(t), \qquad t \in(0, T), \\
U(0)=U_0,
\end{array}\right.
\end{equation}
where
$$
\begin{gathered}
\mathcal{L}(u,u_1):=\left(\begin{array}{c}
u_{xx} \\
-\frac{d}{a}u_x(1) + \frac{b}{a} u_1
\end{array}\right), \; D\left(\mathcal{L}\right):=\mathcal{U}=\left\{(u, u_1)\in H^1(0, 1)\oplus \mathbb{R}: u(1)=u_1\right\}, \\
\mathcal{G}(u,u_1)=u(0), \; D(\mathcal{G})=\mathcal{U}, \qquad U(t):=\left(\begin{array}{c}
u(\cdot,t) \\
u(1,t)
\end{array}\right), \quad U_0:=\left(\begin{array}{c}
u_0\\
u_{0,1}
\end{array}\right).
\end{gathered}
$$
Let $\mathcal{U}'$ denote the dual space of $\mathcal{U}$ with respect to the pivot space $H$. The operator $\mathcal{L}\colon \mathcal{U} \longrightarrow  \mathcal{U}'$ (defined in the weak sense) is continuous, and the operator $\mathcal{G}\colon \mathcal{U} \to \mathbb R$ is onto. Considering the spaces
$$H_s=\left\{(u, u_1)\in H^s(0, 1)\oplus \mathbb{R}: u(1)=u_1, \; u(0)=0\right\}, \quad s>\frac{1}{2},$$
we have  $\ker \mathcal{G}=H_1$.

To prove the well-posedness of the boundary control problem \eqref{cp}, we consider the state space $\mathcal{H}=H_{-1}$, where $H_{-k}$, $k=1,2$, denotes the dual space of $H_k$ with respect to the pivot space $H$. Let $\mathcal{A}_{-1}=\mathcal{L}\rvert_{H_1}$ be the part of $\mathcal{L}$ in $H_1$ and let $\mathcal{A}=\mathcal{L}\rvert_{H_2}$ be the part of $\mathcal{L}$ in $H_2$.
\begin{proposition}\label{prw}
The operator $\mathcal{A}$ is densely defined, self-adjoint, and generates a compact analytic $C_0$-semigroup $\left(e^{t \mathcal{A}}\right)_{t \geq 0}$ of angle $\dfrac{\pi}{2}$ on $H$.    
\end{proposition}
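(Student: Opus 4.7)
The plan is to realize $\mathcal{A}$ as the self-adjoint operator associated, via the first representation theorem, with a closed, symmetric, bounded-below sesquilinear form on $H$, and to then invoke the classical fact that self-adjoint bounded-below operators on a Hilbert space generate analytic semigroups of maximal angle $\pi/2$. Compactness of the semigroup will follow from a Rellich-type embedding. Density of $H_2 = D(\mathcal{A})$ in $H$ is immediate, since $H_2$ contains $C^{\infty}_c(\Omega) \oplus \{0\}$ together with smooth interpolants realizing an arbitrary scalar value at $x = 1$.

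Next, I would introduce the symmetric form $\mathfrak{a}: H_1 \times H_1 \to \mathbb{R}$ defined by
$$\mathfrak{a}(U, V) = \int_0^1 u_x v_x\,\mathrm{d}x - \frac{b}{d}\, u_1 v_1, \qquad U=(u,u_1),\ V=(v,v_1).$$
This form is obviously continuous for the $H^1$-norm on $H_1$; coercivity up to a shift in $H$ follows from the trace--interpolation estimate $|u(1)|^2 \le \varepsilon \|u_x\|_{L^2}^2 + C_\varepsilon \|u\|_{L^2}^2$, which absorbs the indefinite boundary term, while the standing hypothesis $ad > 0$ ensures that the weight $a/d$ in $\|\cdot\|_H$ is positive so that the scalar piece defines a genuine norm. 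Thus $\mathfrak{a}$ is symmetric, closed and bounded below on $H_1$, and the first representation theorem produces a unique self-adjoint, bounded-below operator $\widetilde{\mathcal{A}}$ on $H$ with $(\widetilde{\mathcal{A}}U,V)_H = -\mathfrak{a}(U,V)$.

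The identification $\widetilde{\mathcal{A}} = \mathcal{A}$ is obtained by integration by parts. For $U \in H_2$ and $V \in H_1$, the boundary term at $x = 0$ vanishes since $v(0) = 0$, while the boundary term $u_x(1)v_1$ at $x = 1$ cancels against the second component of $(\mathcal{L}U, V)_H$ owing to the precise $a/d$ weighting in the inner product; a direct computation then yields $(\mathcal{A}U, V)_H = -\mathfrak{a}(U,V)$. Conversely, for $U \in H_1$ making $V \mapsto \mathfrak{a}(U,V)$ continuous in the $H$-norm, testing against $V = (v,0)$ with $v \in C^{\infty}_c(\Omega)$ gives $u_{xx} \in L^2$ (hence $u \in H^2$), and testing against general $V$ with $v_1 \neq 0$ identifies the two components of $\widetilde{\mathcal{A}}U$ as $\bigl(u_{xx},\, -(d/a) u_x(1) + (b/a) u_1\bigr) = \mathcal{L}U$, so $U \in H_2$. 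Hence $D(\widetilde{\mathcal{A}}) = H_2$ and $\widetilde{\mathcal{A}} = \mathcal{A}$.

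Self-adjointness and boundedness below then yield, via a classical theorem (see e.g.\ Ouhabaz \cite{Ou'05}), that $\mathcal{A}$ generates an analytic $C_0$-semigroup of maximal angle $\pi/2$ on $H$. For compactness, the embedding $H_1 \hookrightarrow H$ is compact by Rellich--Kondrachov on the $H^1$-component together with the finite-dimensionality of the scalar summand; consequently $\mathcal{A}$ has compact resolvent, which forces $e^{t\mathcal{A}}$ to be compact for every $t > 0$. The main technical point, and the place where $ad > 0$ really matters, is the identification step: one must verify that integration by parts against the weighted inner product of $H$ reproduces exactly the coefficients appearing in $\mathcal{L}$, so that the form machinery encodes precisely the Wentzell condition at $x=1$ and no spurious boundary relation.
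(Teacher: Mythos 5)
Your proposal is correct and follows essentially the same route as the paper: the same form $\mathfrak{a}$ on $H_1$, the same shift-and-coercivity argument, the representation theorem producing a self-adjoint operator identified with $\mathcal{A}$ by integration by parts, and compactness of the resolvent from the compact embedding $H_1 \hookrightarrow H$. The only difference is that you spell out the identification $D(\widetilde{\mathcal{A}}) = H_2$ in more detail than the paper, which simply asserts it.
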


\begin{proof}
We have $\left\{(u, u_1)\in C^{\infty}([0, 1])\oplus \mathbb R \colon u(1)=u_1, \; u(0)=0\right\} \subset H_2$ is dense in $H$, therefore $\mathcal{A}$ densely defined. To prove the stated properties of $\mathcal{A}$, we introduce the densely bilinear form
$$
\mathfrak{a}[(v, v_1),(w, w_1)]=\int_0^1 v_x w_x d x - \frac{b}{d} v_1 w_1,
$$
with domain $D(\mathfrak{a})=H_1$ in $H$. Given a real number $\mu$, we define bilinear form $\mathfrak{a}+\mu$ by
$$
(\mathfrak{a}+\mu)[(v, v_1),(w, w_1)]=\mathfrak{a}[(v, v_1),(w, w_1)]+\mu\left((v, v_1),(w, w_1)\right)_H.
$$
Using integration by parts and Cauchy-Schwarz inequality, one can choose $\mu \in \mathbb{R}$ such that
$$
(\mathfrak{a}+\mu)[(v, v_1),(v, v_1)] \geq \frac{1}{2}\left\|(v, v_1)\right\|_{H_1}^2 \quad \text { for all } (v, v_1) \in H_1 .
$$
Following \cite{Ou'05}, we can easily check that the form $\mathfrak{a}+\mu$ is densely defined, accretive, symmetric, continuous and closed. Then, we associate to the form $\mathfrak{a}$ the operator $\widetilde{\mathcal{A}}$ defined by
$$
\begin{gathered}
D(\widetilde{\mathcal{A}}):=\left\{\left(v, v_1\right) \in H_1, \text { there exists }\left(z, z_1\right) \in H\right. \text { such that } \\
\left.\mathfrak{a}\left[\left(v, v_1\right),\left(w, w_1\right)\right]=\left(\left(z, z_1\right),\left(w, w_1\right)\right)_{H} \text { for all }\left(z, z_{1}\right) \in H_1\right\}, \\
\widetilde{\mathcal{A}}\left(v, v_1\right):=-\left(z, z_1\right) \quad \text{ for all }\left(v, v_1\right) \in D(\widetilde{\mathcal{A}}).
\end{gathered}
$$
Therefore, the operator $\widetilde{\mathcal{A}}$ is self-adjoint, bounded above, and has compact resolvent (since $H_1 \hookrightarrow H$ is compact). Then it generates a compact analytic $C_0$-semigroup on $H$ of angle $\dfrac{\pi}{2}$ (see \cite[p.~106]{EN00}). By using integration by parts, we can show that $\mathcal{A}=\widetilde{\mathcal{A}}$. This completes the proof.
\end{proof}
Following \cite{AT12}, we also have:
\begin{proposition}
The operator $\mathcal{A}_{-1}$ is densely defined, self-adjoint, and generates a compact analytic $C_0$-semigroup $\left(e^{t \mathcal{A}_{-1}}\right)_{t \geq 0}$ of angle $\dfrac{\pi}{2}$ in $\mathcal{H}$.    
\end{proposition}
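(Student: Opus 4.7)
The plan is to transfer the conclusions of Proposition~\ref{prw} from the pivot space $H$ to the extrapolation space $\mathcal{H}=H_{-1}$ by invoking the form-based extrapolation framework of \cite{AT12}. The key observation is that the sesquilinear form $\mathfrak{a}$ introduced in the proof of Proposition~\ref{prw}, together with its coercive shift $\mathfrak{a}+\mu$, not only produces the $H$-operator $\mathcal{A}$ but, when read at the level of the form duality, also produces the $H_{-1}$-operator $\mathcal{A}_{-1}$. Concretely, since $\mathfrak{a}+\mu$ is continuous and coercive on $H_1$, Lax--Milgram yields an isometric isomorphism $\mathcal{J}\colon H_1\to H_{-1}$; the associated unbounded operator on $H_{-1}$ with form domain $H_1$ is exactly $\mathcal{A}_{-1}$.

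I would then verify the four claimed properties in sequence. Density of the domain in $\mathcal{H}$ is immediate from the chain of dense continuous embeddings $H_1\hookrightarrow H\hookrightarrow H_{-1}$. Self-adjointness of $\mathcal{A}_{-1}$ on $H_{-1}$ follows from the symmetry of $\mathfrak{a}$ via the standard duality argument: the $H_{-1}$-adjoint of $\mathcal{A}_{-1}$ is the operator associated with the transposed form, which coincides with $\mathfrak{a}$ itself. The analytic $C_0$-semigroup of angle $\pi/2$ on $H_{-1}$ is precisely the extrapolation of $(e^{t\mathcal{A}})_{t\geq 0}$, and the preservation of the sectoriality angle under form extrapolation is one of the main results quoted from \cite{AT12}. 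Finally, compactness of the semigroup follows from the compact embedding $H_1\hookrightarrow H$ (already used in Proposition~\ref{prw} via Rellich--Kondrachov) together with its dual $H\hookrightarrow H_{-1}$, since the resolvent of $\mathcal{A}_{-1}$ maps $H_{-1}$ boundedly into $H_1$ and the inclusion $H_1\hookrightarrow H_{-1}$ factors through the compact step.

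The main obstacle is verification rather than computation: one must carefully check that the standing hypotheses of \cite{AT12} (symmetry, closedness, coercivity after a shift, and compactness of the form-domain embedding) are genuinely met by the pair $(\mathfrak{a},H_1)$ in our non-standard setting, given the weighted inner product on $H=L^2(\Omega)\oplus\mathbb{R}$ with weight $a/d$ and the direct summand $\mathbb{R}$ appearing in $H_1$. Once this check is performed, all four stated properties of $\mathcal{A}_{-1}$ fall out directly from the extrapolation theorems, with essentially no new analysis beyond what was already carried out in Proposition~\ref{prw}.
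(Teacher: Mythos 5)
Your proposal is correct and follows essentially the same route as the paper, which proves this proposition simply by invoking the form/extrapolation framework of \cite{AT12} applied to the same coercive symmetric form $\mathfrak{a}+\mu$ on $H_1$ used for Proposition \ref{prw}. Your elaboration of the density, self-adjointness, angle-preservation, and compact-resolvent arguments is exactly the content the paper leaves implicit in that citation.
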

Consider the extension $\mathcal{A}_{-1}: H \to H_{-2}$, $\alpha\in \rho(\mathcal{A}_{-1})$, and the control operator defined by
$$\mathcal{B}\kappa:=(\alpha-\mathcal{A}_{-1})\mathcal{D}\kappa, \qquad \kappa\in \mathbb R,$$
where $\mathcal{D}: \mathbb R \longrightarrow  H$ is the Dirichlet map defined by
$$\left(\begin{array}{c}
u \\
u_1
\end{array}\right)=\mathcal{D}\kappa \Longleftrightarrow \begin{cases}u_{xx}=\alpha u & \text { in } \Omega, \\ (b-\alpha a)u_1 =d u_x(1), &  \\ u(0)=\kappa. & \end{cases}$$
System \eqref{cp} can be rewritten
equivalently in the classical form
\begin{equation}\label{cpn}
\left\{\begin{array}{l}
U'(t)=\mathcal{A}_{-1}U(t)+\mathcal{B}f(t), \qquad U'(t)\in \mathcal{H},\\
U(0)=U_0 \in \mathcal{H}.
\end{array}\right.
\end{equation}
 Let $\Psi\in \mathcal{H}_{-1}$ and $\Phi\in  H_1$ and denote $(\Psi, \Phi)_{\mathcal{H}_{-1}, H_1}=((\alpha-\mathcal{A}_{-1})^{-1}\Psi, \Phi)_H$ the duality bracket. Next, we identify $\mathcal{B}^*$, the adjoint  of operator $\mathcal{B}$.
\begin{lemma}\label{ladj}
The operator $\mathcal{B}^*$ is given by
$$\mathcal{B}^*(\psi,\psi_1)=\phi_x(0),  \qquad  (\psi,\psi_1)\in H_1,$$
where $(\phi,\phi_1) \in H_1$ is the solution of
\begin{equation}
    \begin{cases}\phi_{xx}=\alpha \phi - \psi & \text { in } \Omega, \\ a \phi_{xx}(1)-b \phi_1+d \phi_x(1)=0, &  \\ \phi(0)=0. & \end{cases}
\end{equation}
\end{lemma}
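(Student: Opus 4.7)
The plan is to identify $\mathcal{B}^*$ directly from the defining identity
$$(\mathcal{B}\kappa,(\psi,\psi_1))_{\mathcal{H}_{-1},H_1}=\kappa\,\mathcal{B}^*(\psi,\psi_1),$$
valid for all $\kappa\in\mathbb{R}$ and $(\psi,\psi_1)\in H_1$. Using the formula $\mathcal{B}\kappa=(\alpha-\mathcal{A}_{-1})\mathcal{D}\kappa$ together with the definition of the duality bracket stated just before the lemma, the left-hand side collapses to the plain inner product
$$(\mathcal{D}\kappa,(\psi,\psi_1))_H=\int_0^1 u\psi\,\mathrm{d}x+\frac{a}{d}u_1\psi_1,$$
where $(u,u_1):=\mathcal{D}\kappa$ solves $u_{xx}=\alpha u$ in $\Omega$ with $u(0)=\kappa$, $u(1)=u_1$, and $(b-\alpha a)u_1=d u_x(1)$. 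So the whole problem reduces to computing this integral and showing it equals $\kappa\,\phi_x(0)$.

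The natural tool is integration by parts, substituting $\psi=\alpha\phi-\phi_{xx}$ (from the first equation of the adjoint system). Performing two integrations by parts in $\int_0^1 u(\alpha\phi-\phi_{xx})\,\mathrm{d}x$ makes the two $\alpha\int u\phi$ terms cancel, and using the trace conditions $u(0)=\kappa$, $\phi(0)=0$, $u(1)=u_1$, $\phi(1)=\phi_1$ produces the boundary contributions
$$\int_0^1 u\psi\,\mathrm{d}x=\kappa\,\phi_x(0)-u_1\phi_x(1)+u_x(1)\phi_1.$$
For the remaining term $\tfrac{a}{d}u_1\psi_1$, I would evaluate the adjoint equation at $x=1$ to get $\psi_1=\alpha\phi_1-\phi_{xx}(1)$, then eliminate $\phi_{xx}(1)$ through the Wentzell-type condition $a\phi_{xx}(1)-b\phi_1+d\phi_x(1)=0$. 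This yields $\psi_1=(\alpha-b/a)\phi_1+(d/a)\phi_x(1)$, and after multiplying by $a/d$ and invoking $(b-\alpha a)u_1=d u_x(1)$ to simplify, one obtains $\tfrac{a}{d}u_1\psi_1=u_1\phi_x(1)-u_x(1)\phi_1$. Summing the two pieces cancels the $\pm u_1\phi_x(1)$ and $\pm u_x(1)\phi_1$ contributions exactly, leaving precisely $\kappa\,\phi_x(0)$, which is the required identity.

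I do not expect any conceptual obstacle; the calculation is an adjoint-by-integration-by-parts derivation, and the compatible pairing of the Dirichlet boundary datum of $u$ with the trace $\phi_x(0)$ of the adjoint is exactly the expected outcome for a Dirichlet control. The only point that needs a brief justification is that the auxiliary problem for $(\phi,\phi_1)$ is well-posed with sufficient regularity (in particular so that $\phi_x(0)$ is defined and the two integrations by parts are valid). This follows because $\alpha\in\rho(\mathcal{A}_{-1})$ by assumption and $\mathcal{A}$ is self-adjoint with compact resolvent by Proposition \ref{prw}, so the system admits a unique solution with $\phi\in H^2(\Omega)$, which is enough for all traces used above.
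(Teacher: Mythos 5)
Your proposal is correct, and all the cancellations you describe do check out (I verified the Green's identity and the elimination of the $\pm u_1\phi_x(1)$ and $\pm u_x(1)\phi_1$ terms using $(b-\alpha a)u_1=du_x(1)$ and the Wentzell condition for $\phi$). The route is, however, slightly different from the paper's. The paper never touches the Dirichlet map explicitly: it takes a smooth trajectory $U$ of the evolution system \eqref{cpn} with constant control $\kappa$, writes
$(\alpha U-U',\Psi)_{\mathcal{H}_{-1},H_1}=((\alpha-\mathcal{A}_{-1})U,\Psi)_{\mathcal{H}_{-1},H_1}-(\mathcal{B}\kappa,\Psi)_{\mathcal{H}_{-1},H_1}$, performs the spatial integration by parts on $(\alpha U-U',\Phi)_H$ for $\Phi\in H_2$ to produce the boundary term $-\kappa\phi_x(0)$, and then specializes $\Phi=(\alpha-\mathcal{A}_{-1})^{-1}\Psi$. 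You instead collapse $(\mathcal{B}\kappa,\Psi)_{\mathcal{H}_{-1},H_1}$ to $(\mathcal{D}\kappa,\Psi)_H$ via the definition $\mathcal{B}\kappa=(\alpha-\mathcal{A}_{-1})\mathcal{D}\kappa$ and the duality bracket, and evaluate that pairing by a purely elliptic Green's formula. The two arguments rest on the same integration by parts; yours is more self-contained and makes every boundary cancellation explicit (including the role of the second component $\tfrac{a}{d}u_1\psi_1$ of the inner product, which the paper's one-line ``integration by parts yields'' glosses over), while the paper's version ties the identification of $\mathcal{B}^*$ directly to the transposition identity \eqref{eqtr} used later. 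One small point worth making explicit in your write-up: the relation $u(1)=u_1$ for $\mathcal{D}\kappa$, which you use in the boundary terms, is not written in the displayed definition of $\mathcal{D}$ but is implicit in the requirement $\mathcal{D}\kappa\in\mathcal{U}=D(\mathcal{L})$; similarly $\psi_1=\psi(1)$ comes from $(\psi,\psi_1)\in H_1$.
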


\begin{proof}
Let $U$ be a smooth solution to \eqref{cpn} corresponding to a control $\kappa$. Then, for all $\Psi \in H_1$,
\begin{equation}\label{1eq}
    (\alpha U-U', \Psi)_{\mathcal{H}_{-1}, H_1}=((\alpha-\mathcal{A}_{-1})U, \Psi)_{\mathcal{H}_{-1}, H_1} -(\mathcal{B}\kappa, \Psi)_{\mathcal{H}_{-1}, H_1}.
\end{equation}
By \eqref{eq:1} with $f(t)=\kappa$, integration by parts yields, for all $\Phi:=(\phi,\phi_1) \in H_2$,
$$
(\alpha U-U',\Phi)_H=(U, (\alpha-\mathcal{A}_{-1})\Phi)_H - \kappa \phi_x(0).
$$
Choosing $\Phi=(\alpha-\mathcal{A}_{-1})^{-1}\Psi$, \eqref{1eq} implies that
$$
(\mathcal{B}\kappa, \Psi)_{\mathcal{H}_{-1}, H_1}= \kappa \phi_x(0)=\kappa \mathcal{B}^*\Psi,  \qquad \forall \kappa\in \mathbb R,\; \forall \Psi\in H_1.
$$
This completes the proof.
\end{proof}

This yields the following result.
\begin{lemma}
    The boundary control system \eqref{cp} is well-posed in $\mathcal{H}$.
\end{lemma}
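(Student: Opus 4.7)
The plan is to invoke the abstract well-posedness result for admissible boundary control systems (see e.g.\ \cite{TW09}): it suffices to check that (i) $\mathcal{A}_{-1}$ generates a $C_0$-semigroup on $\mathcal{H} = H_{-1}$ and (ii) $\mathcal{B}$ is an admissible control operator for that semigroup, after which the mild solution
$$U(t) = e^{t\mathcal{A}_{-1}} U_0 + \int_0^t e^{(t-s)\mathcal{A}_{-1}} \mathcal{B} f(s)\, ds$$
belongs to $C([0,T]; \mathcal{H})$ and depends continuously on $(U_0, f) \in \mathcal{H} \times L^2(0,T)$, which is exactly the well-posedness of \eqref{cp}. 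Condition (i) is the content of the preceding proposition.

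For (ii), I would use the dual characterization of admissibility: it is enough to produce $C_T > 0$ such that
$$\int_0^T \left|\mathcal{B}^* e^{t\mathcal{A}} \Psi\right|^2 dt \leq C_T \|\Psi\|_{H_1}^2, \qquad \forall\, \Psi \in H_1,$$
where the self-adjointness $\mathcal{A}^* = \mathcal{A}$ from Proposition \ref{prw} has been used. By Lemma \ref{ladj}, $\mathcal{B}^* \Psi = \phi_x(0)$, where $\phi$ solves the elliptic Wentzell problem with source $\psi$. Applying this with $e^{t\mathcal{A}}\Psi$ in place of $\Psi$ and combining with the contractivity $\|e^{t\mathcal{A}}\Psi\|_H \leq \|\Psi\|_H$, elliptic regularity for the Wentzell problem (which yields $\|\phi\|_{H^2(\Omega)} \leq C \|\psi\|_{L^2(\Omega)}$) together with the one-dimensional embedding $H^2(0,1) \hookrightarrow C^1([0,1])$ gives the pointwise bound $|\mathcal{B}^* e^{t\mathcal{A}} \Psi| \leq C \|\Psi\|_H \leq C \|\Psi\|_{H_1}$. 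Integration over $(0,T)$ then produces the admissibility estimate with $C_T = C^2 T$.

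The only nontrivial ingredient is the elliptic regularity bound $\|\phi\|_{H^2} \leq C \|\psi\|_{L^2}$ for the Wentzell boundary problem; this is where the sign condition $ad > 0$ and the weight $a/d$ in the inner product on $H$ enter decisively, as they make the form $\mathfrak{a}$ coercive on $H_1$ (after a shift), as already exploited in Proposition \ref{prw}. Granted this, Lax--Milgram gives $\phi \in H_1$, and a standard bootstrap using the equation itself promotes the regularity to $H^2$. The well-posedness claim then follows immediately; in fact, the estimate above shows that $\mathcal{B}$ is bounded from $\mathbb{R}$ into $H \hookrightarrow \mathcal{H}$, a situation that is strictly stronger than mere admissibility.
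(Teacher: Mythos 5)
Your overall strategy (semigroup generation plus admissibility of $\mathcal{B}$, then the abstract well-posedness theory) is exactly the paper's, but the verification of admissibility has a genuine gap coming from a mismatch of dualities. The adjoint $\mathcal{B}^*\Psi=\phi_x(0)$ of Lemma \ref{ladj} is computed with respect to the pairing $(\Psi,\Phi)_{\mathcal{H}_{-1},H_1}=((\alpha-\mathcal{A}_{-1})^{-1}\Psi,\Phi)_H$, i.e.\ it already incorporates one application of the resolvent. If you use that version of $\mathcal{B}^*$, the dual admissibility test for the state space $\mathcal{H}=H_{-1}$ must carry $\|\Psi\|_{\mathcal{H}}^2$ on the right-hand side, not $\|\Psi\|_{H_1}^2$; if instead you use the pairing $\langle H_{-1},H_1\rangle$ extending the $H$-inner product, the correct observation functional is the direct trace $\Psi\mapsto\psi_x(0)$, and the condition to prove is the hidden-regularity estimate $\int_0^T\bigl|[e^{t\mathcal{A}}\Psi]_x(0)\bigr|^2\,dt\le C_T\|\Psi\|_{H_1}^2$. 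What you actually establish, namely $|\mathcal{B}^*\Psi|\le C\|\Psi\|_H$ via elliptic regularity for $(\alpha-\mathcal{A})^{-1}$, is only a restatement of the hypothesis $\mathcal{B}\in\mathcal{L}(\mathbb{R},H_{-2})$ (equivalently, $\|\mathcal{D}\kappa\|_H\le C|\kappa|$); it does not yield admissibility in $\mathcal{H}$. Your closing claim that $\mathcal{B}$ is bounded from $\mathbb{R}$ into $H$ is false: $\mathcal{B}\kappa=(\alpha-\mathcal{A}_{-1})\mathcal{D}\kappa$ lies in $H$ only if $\mathcal{D}\kappa\in D(\mathcal{A})=H_2$, which fails because $\mathcal{D}\kappa$ takes the value $\kappa\neq 0$ at $x=0$; this would also contradict the paper's own remark that the system is ill-posed in $H$.

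The gap is not merely notational: the correct estimate cannot be obtained by a pointwise-in-time bound. Analytic smoothing gives at best $\bigl|[e^{t\mathcal{A}}\Psi]_x(0)\bigr|\le C\,t^{-1/2}\|\Psi\|_{H_1}$, whose square is not integrable near $t=0$, so the time integration genuinely matters. Two repairs are available: (i) the route the paper takes, invoking the abstract fact that for a self-adjoint (bounded above) generator a control operator that is bounded into the dual of the form domain --- here $H_{-2}$, which sits exactly half a step below the state space $\mathcal{H}=H_{-1}$ --- is automatically admissible; or (ii) a direct computation in the eigenbasis, where the diagonal terms of $\int_0^T\bigl|\sum_n c_n e^{-\lambda_n t}z_n'(0)\bigr|^2dt$ contribute $\sum_n c_n^2\,|z_n'(0)|^2/(2\lambda_n)\approx\sum_n c_n^2$ and the off-diagonal terms are controlled by a Schur/Hilbert-type inequality against $\|\Psi\|_{H_1}^2=\sum_n\lambda_n c_n^2$.
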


\begin{proof}
    Since $\mathcal{A}_{-1}$ generates a symmetric $C_0$-semigroup $\left(e^{t\mathcal{A}_{-1}}\right)_{t\ge 0}$ on $\mathcal{H}$, it suffices to check that the operator $\mathcal{B}$ is admissible for $\left(e^{t\mathcal{A}_{-1}}\right)_{t\ge 0}$. Owing to the fact that $\mathcal{B}: \mathbb R \to H_{-2}$ is continuous, by \cite[Lemma 8]{Tr24}, we conclude that the control operator $\mathcal{B}$ is admissible for $\left(e^{t\mathcal{A}_{-1}}\right)_{t\ge 0}$.
\end{proof}

\begin{remark}
    The boundary control system \eqref{cp} is ill-posed in the space $H$, since for $U_0\in H$ the solution may not belong to $C([0, T]; H)$.
\end{remark}

Consequently, we obtain the well-posedness result of System \eqref{cp}, see \cite[Proposition 10.1.8]{TW09} for the proof.
\begin{proposition}
For every $T>0,\; U_0:=(u_0,u_{0,1}) \in \mathcal{U}$ and $f \in H^1(0, T)$ that satisfy the compatibility condition $u_0(0)=f(0)$, the boundary control problem \eqref{cp} admits a unique solution such that $U \in C([0, T]; \mathcal{U}) \cap C^1([0, T];\mathcal{H}).$
\end{proposition}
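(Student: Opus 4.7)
The plan is to reduce \eqref{cp} to an inhomogeneous abstract Cauchy problem with homogeneous boundary data by means of a Dirichlet lift, and then to apply the semigroup regularity theory encoded in \cite[Proposition~10.1.8]{TW09}. Fix $\alpha \in \rho(\mathcal{A}_{-1})$ and set $V(t) := U(t) - \mathcal{D}f(t)$. By construction of $\mathcal{D}$ one has $\mathcal{G}(V(t)) = f(t) - f(t) = 0$, so $V(t) \in \ker \mathcal{G} = H_1 = D(\mathcal{A}_{-1})$. Using the reformulation \eqref{cpn} together with the identity $\mathcal{B} = (\alpha - \mathcal{A}_{-1})\mathcal{D}$, a direct computation (in which the two $\mathcal{A}_{-1}\mathcal{D}f(t)$ contributions cancel) gives
\begin{equation*}
V'(t) = \mathcal{A}_{-1} V(t) + g(t), \qquad V(0) = U_0 - \mathcal{D}f(0),
\end{equation*}
with source term $g(t) := \alpha \mathcal{D}f(t) - \mathcal{D}f'(t)$.

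Next I would verify that the data of this lifted Cauchy problem have the regularity required to invoke the abstract theory. Since $\mathcal{D}\colon \mathbb{R} \to \mathcal{U}$ is bounded by elliptic regularity for the problem defining $\mathcal{D}\kappa$ in Section~\ref{sec2}, and since $f, f' \in L^2(0,T)$ by hypothesis, the source $g$ belongs to $L^2(0,T; H) \hookrightarrow L^2(0,T; \mathcal{H})$. The compatibility condition $u_0(0) = f(0)$ translates precisely to $\mathcal{G}(U_0 - \mathcal{D}f(0)) = 0$, i.e.\ $V(0) \in H_1 = D(\mathcal{A}_{-1})$. Because $\mathcal{A}_{-1}$ is self-adjoint and generates an analytic $C_0$-semigroup on $\mathcal{H}$ (Proposition~\ref{prw} and its corollary), the maximal regularity for analytic semigroups produces a unique $V \in C([0,T]; H_1) \cap C^1([0,T]; \mathcal{H})$. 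Setting $U := V + \mathcal{D}f$ then yields a solution of \eqref{cp}, and the continuity of $\mathcal{D}$ combined with $f \in H^1(0,T) \hookrightarrow C([0,T])$ ensures that $\mathcal{D}f \in C([0,T]; \mathcal{U})$, whence $U \in C([0,T]; \mathcal{U}) \cap C^1([0,T]; \mathcal{H})$.

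Uniqueness is standard: the difference of two solutions to \eqref{cp} with the same data solves the homogeneous inhomogeneous Cauchy problem in $\mathcal{H}$ and therefore vanishes by the uniqueness of the semigroup associated with $\mathcal{A}_{-1}$. The main obstacle is technical rather than conceptual: one must check that the transition from the mild interpretation provided by the admissibility of $\mathcal{B}$ (already established in Lemma~\ref{ladj} and the ensuing well-posedness lemma) to the strong interpretation in $C^1([0,T]; \mathcal{H}) \cap C([0,T]; \mathcal{U})$ is legitimate, which relies crucially on both the $H^1$-in-time regularity of $f$ and the analyticity of $(e^{t\mathcal{A}_{-1}})_{t\ge 0}$. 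This is precisely the content of \cite[Proposition~10.1.8]{TW09}, which we apply in the present setting to conclude.
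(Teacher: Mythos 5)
Your proposal is correct and follows essentially the same route as the paper, which proves this proposition purely by invoking \cite[Proposition~10.1.8]{TW09}; the Dirichlet-lift reduction $V=U-\mathcal{D}f$ you sketch is exactly the argument underlying that cited result. One small imprecision: with $f$ only in $H^1(0,T)$ the source $g=\alpha\mathcal{D}f-\mathcal{D}f'$ is merely $L^2$ in time, so $V$ itself need not lie in $C^1([0,T];\mathcal{H})$; the $C^1$ regularity is recovered for $U=V+\mathcal{D}f$ because the $\mathcal{D}f'$ contributions cancel, leaving $U'=\mathcal{A}_{-1}V+\alpha\mathcal{D}f\in C([0,T];\mathcal{H})$ --- a one-line fix that is in any case subsumed by the citation.
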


Next, we define a (weak) transposition solution to \eqref{eq:1}. Let us introduce the backward system
\begin{equation}\label{tr}
\begin{cases}
\varphi_{t}+\varphi_{xx}=g, \quad \text{in } D_T,\\ 
a\varphi _{xx}(1,t)-b\varphi_1(t)+d\varphi _{x}(1,t)=g_1, \\ 
\varphi(0,t)=0, \\ 
\varphi(\cdot,T)=0, \quad \text{ in } \Omega, \; \varphi(1,T)=0, 
\end{cases}
\end{equation}
where $G:=(g,g_1)\in L^2(0,T;H)$. From Proposition \ref{prw}, System \eqref{tr} admits a unique strong solution satisfying  $\Phi:=(\varphi,\varphi_1) \in L^2(0,T;H_2) \cap C([0,T];H_1)$.
\begin{definition}
    Given $U_0\in \mathcal{H}$ and a control $f\in L^2(0,T)$, a function $U(t):=(u(\cdot,t),u_1(t))\in L^2(0,T;H)$ is said to be a solution by transposition to \eqref{eq:1} if the following identity
    \begin{equation} \label{eqtr}
\int_0^T (U(t),G(t))_{H}dt=(U(0),\Phi(0))_{\mathcal{H},H_1}+\int_{0}^{T}\varphi
_{x}(0,t)f(t)dt
\end{equation} 
holds for any $G\in L^2(0,T;H)$, where $\Phi$ is the corresponding solution to \eqref{tr} and $(\cdot,\cdot)_{\mathcal{H},H_1}$ is the duality bracket.
\end{definition}
This definition is motivated by the calculation for a smooth solution $U$ (see Lemma \ref{lmm:1}). Therefore, following \cite[Appendix A]{Ca10}, one can prove
\begin{proposition}
    For every $U_0 \in \mathcal{H}$ and $f\in L^2(0,T)$, System \eqref{eq:1} admits a unique transposition solution $U\in L^2(0,T;H) \cap C([0,T];\mathcal{H})$ such that
    $$\|U\|_{L^2(0,T;H)}+\|U\|_{C([0,T];\mathcal{H})} \le C\left(\|U_0\|_{\mathcal{H}}+\|f\|_{L^2(0,T)}\right)$$
    for some positive constant $C$.
\end{proposition}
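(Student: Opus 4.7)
The plan is to follow the standard Lions-type transposition argument, building the weak solution as the Riesz representative of a continuous linear functional on $L^2(0,T;H)$ and then upgrading its regularity to $C([0,T];\mathcal{H})$ by density.

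First I would set up the test system. For $G=(g,g_1)\in L^2(0,T;H)$, Proposition~\ref{prw} (together with its analogue for $\mathcal{A}_{-1}$ and standard maximal-regularity for analytic semigroups) yields a unique strong solution $\Phi=(\varphi,\varphi_1)\in L^2(0,T;H_2)\cap C([0,T];H_1)$ of the backward problem \eqref{tr}, with the continuous dependence
\[
\|\Phi\|_{L^2(0,T;H_2)}+\|\Phi\|_{C([0,T];H_1)}\le C\|G\|_{L^2(0,T;H)}.
\]
The key additional ingredient is a hidden-regularity trace bound
\[
\|\varphi_x(0,\cdot)\|_{L^2(0,T)}\le C\|G\|_{L^2(0,T;H)},
\]
which follows by duality from the admissibility of $\mathcal{B}$ for the semigroup $(e^{t\mathcal{A}_{-1}})_{t\ge 0}$ established above: indeed, by Lemma~\ref{ladj} the map $(\psi,\psi_1)\mapsto \phi_x(0)$ is precisely $\mathcal{B}^\ast$ applied to the backward semigroup trajectory, so admissibility of $\mathcal{B}$ is equivalent to $L^2$-admissibility of $\mathcal{B}^\ast$, which is exactly what we need.

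Next I would define the linear form
\[
L(G):=(U_0,\Phi(0))_{\mathcal{H},H_1}+\int_0^T \varphi_x(0,t)f(t)\,\d t.
\]
Combining the two estimates above,
\[
|L(G)|\le C\bigl(\|U_0\|_{\mathcal{H}}+\|f\|_{L^2(0,T)}\bigr)\|G\|_{L^2(0,T;H)},
\]
so $L$ is a continuous linear functional on $L^2(0,T;H)$. By the Riesz representation theorem there exists a unique $U\in L^2(0,T;H)$ with $\int_0^T(U(t),G(t))_H\,\d t=L(G)$ for all such $G$, which is exactly the transposition identity \eqref{eqtr}; the norm equality in Riesz yields the $L^2(0,T;H)$ part of the estimate, and uniqueness is immediate since the identity determines $U$ against a dense family of test functions.

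It remains to upgrade $U$ to $C([0,T];\mathcal{H})$. For this I would use a density argument: pick sequences $U_0^n\in\mathcal{U}$ and $f^n\in H^1(0,T)$ with $u_0^n(0)=f^n(0)$ converging to $(U_0,f)$ in $\mathcal{H}\times L^2(0,T)$. The previous proposition produces classical solutions $U^n\in C([0,T];\mathcal{U})\cap C^1([0,T];\mathcal{H})$, for which integration by parts against $\Phi$ (this is the computation alluded to via Lemma~\ref{lmm:1}) shows that $U^n$ satisfies \eqref{eqtr} with data $(U_0^n,f^n)$. Applying the a priori transposition estimate to the differences $U^n-U^m$ gives
\[
\|U^n-U^m\|_{L^2(0,T;H)}+\|U^n-U^m\|_{C([0,T];\mathcal{H})}\le C\bigl(\|U_0^n-U_0^m\|_{\mathcal{H}}+\|f^n-f^m\|_{L^2(0,T)}\bigr),
\]
so $(U^n)$ is Cauchy in $C([0,T];\mathcal{H})$ and converges to some $\widetilde U\in C([0,T];\mathcal{H})$ which, by passing to the limit in \eqref{eqtr}, must coincide with $U$ in $L^2(0,T;H)$. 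Taking $n\to\infty$ in the estimate for $U^n$ yields the full bound, and $\widetilde U(0)=U_0$ by continuity.

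The main technical obstacle is the hidden-regularity bound on $\varphi_x(0,\cdot)$: it cannot be read off from the energy space $H_1$ alone (since $H_1\hookrightarrow C([0,1])$ does not control a normal derivative), so one must invoke admissibility of the Dirichlet control operator $\mathcal{B}$, which fortunately was already secured above via the admissibility lemma of \cite{Tr24}. Everything else — the Riesz step, the density argument, and uniqueness — is then mechanical.
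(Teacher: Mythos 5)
Your overall scheme --- Riesz representation against the backward problem \eqref{tr}, followed by a density upgrade to $C([0,T];\mathcal{H})$ --- is the standard transposition argument that the paper itself delegates to \cite[Appendix A]{Ca10}, so the route is the intended one. One simplification first: the trace bound $\|\varphi_x(0,\cdot)\|_{L^2(0,T)}\le C\|G\|_{L^2(0,T;H)}$ does not require the admissibility detour for the inhomogeneous problem. The paper already records the maximal regularity $\Phi\in L^2(0,T;H_2)$ for \eqref{tr}, and since $H_2\subset H^2(0,1)\oplus\mathbb{R}$ controls $\varphi_x(0,t)$ for a.e.\ $t$, the bound is immediate from $\|\Phi\|_{L^2(0,T;H_2)}\le C\|G\|_{L^2(0,T;H)}$. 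Your duality-via-admissibility argument is also valid here, but heavier than necessary for this particular term.

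There is, however, a genuine (though repairable) gap in your final step. The ``a priori transposition estimate'' produced by the Riesz step controls only $\|U\|_{L^2(0,T;H)}$; it says nothing about $\|U\|_{C([0,T];\mathcal{H})}$, so you cannot ``apply'' it to the differences $U^n-U^m$ to conclude that $(U^n)$ is Cauchy in $C([0,T];\mathcal{H})$ --- that is precisely the estimate you are trying to establish, and as written the argument is circular. To close it, you must first derive for the smooth solutions $U^n$ the pointwise-in-time bound $\|U^n(\tau)\|_{\mathcal{H}}\le C\bigl(\|U^n_0\|_{\mathcal{H}}+\|f^n\|_{L^2(0,\tau)}\bigr)$ uniformly in $\tau\in[0,T]$. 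This follows by running the integration-by-parts identity \eqref{eq:3} on $[0,\tau]$ against the \emph{homogeneous} backward problem \eqref{eq:2} with final datum $\Phi_\tau\in H_1$ prescribed at time $\tau$, which gives
\begin{equation*}
(U^n(\tau),\Phi_\tau)_{\mathcal{H},H_1}=(U^n_0,\Phi(0))_{\mathcal{H},H_1}+\int_0^\tau\varphi_x(0,t)f^n(t)\,\d t,
\end{equation*}
and then invoking the observation estimate $\|\varphi_x(0,\cdot)\|_{L^2(0,\tau)}\le C\|\Phi_\tau\|_{H_1}$ with $C$ independent of $\tau$. Here, unlike for the source-driven problem above, admissibility of $\mathcal{B}$ (equivalently, $L^2$-admissibility of $\mathcal{B}^*$ as an observation operator for the adjoint semigroup) is exactly the ingredient needed, since the homogeneous adjoint flow with $H_1$ data gives no integrable $H_2$ bound up to $t=\tau$. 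Taking the supremum over $\|\Phi_\tau\|_{H_1}\le 1$ and over $\tau$ yields the uniform bound, after which your Cauchy-sequence argument and the passage to the limit in \eqref{eqtr} go through verbatim.
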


\section{Controllability and the moment problem}\label{sec3}
To begin with, we define the boundary null controllability for System \eqref{eq:1}.
\begin{definition}
System \eqref{eq:1} is boundary null controllable in $\mathcal{H}$ in time $T>0$ if for every initial datum $U_0\in \mathcal{H}$ there exists a control $f\in L^2(0,T)$ such that the corresponding solution $U$ of \eqref{eq:1} satisfies $U(T)=0$ in $\mathcal{H}$.
\end{definition}
Then, we state a first characterization of boundary null controllability.
\begin{lemma} \label{lmm:1} System \eqref{eq:1} is boundary null controllable in $\mathcal{H}$ in time $T>0$ if and only if for any initial datum $U_0 \in \mathcal{H}$ there exists $f\in L^2(0,T)$ such that the solution $U$ of \eqref{eq:1} satisfies
\begin{equation} \label{eq:6}
-(U(0),\Phi(0))_{\mathcal{H},H_1}=\int_{0}^{T}\varphi
_{x}(0,t)f(t)dt
\end{equation} 
for any $\Phi_T:=(\varphi_T,\varphi_{T,1}) \in H_1$, where $\Phi:=(\varphi,\varphi_1)$ is the solution of the backward adjoint problem given by
\begin{equation}\label{eq:2}
\begin{cases}
\varphi_{t}+\varphi_{xx}=0, \quad \text{in } D_T,\\ 
a\varphi _{xx}(1,t)-b\varphi_1(t)+d\varphi _{x}(1,t)=0, \\ 
\varphi(0,t)=0, \\ 
\varphi(\cdot,T)=\varphi_{T}, \quad \text{in } \Omega, \; \varphi(1,0)=\varphi_{T,1}.
\end{cases} 
\end{equation}
\end{lemma}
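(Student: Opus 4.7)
The plan is to derive a master bilinear identity by pairing a forward solution of \eqref{eq:1} with a backward solution of \eqref{eq:2}, and then read off both implications directly. Specifically, I aim to show that for every $\Phi_T\in H_1$ with corresponding backward solution $\Phi$ of \eqref{eq:2}, and for every solution $U$ of \eqref{eq:1} driven by a control $f\in L^2(0,T)$ from initial data $U_0\in\mathcal{H}$,
\begin{equation*}
(U(T),\Phi_T)_{\mathcal{H},H_1} - (U(0),\Phi(0))_{\mathcal{H},H_1} = \int_0^T \varphi_x(0,t)\,f(t)\,dt.
\end{equation*}
Once this is in hand, null controllability $U(T)=0$ substituted into it yields \eqref{eq:6}; conversely, if \eqref{eq:6} holds for all $\Phi_T\in H_1$, the identity forces $(U(T),\Phi_T)_{\mathcal{H},H_1}=0$ for every $\Phi_T\in H_1$, so $U(T)=0$ in $\mathcal{H}=H_{-1}$ by non-degeneracy of the duality pairing.

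First I would derive the identity under strong regularity, so that all traces used below make pointwise sense. Multiplying $u_t=u_{xx}$ by $\varphi$, integrating over $D_T$, integrating by parts once in $t$ via $\varphi_t=-\varphi_{xx}$, and twice in $x$, the interior terms cancel and what remains is
\begin{equation*}
\int_0^1 u(\cdot,T)\varphi_T\,dx - \int_0^1 u_0\,\varphi(\cdot,0)\,dx = \int_0^T \bigl[u_x\varphi - u\varphi_x\bigr]_0^1\,dt.
\end{equation*}
The $x=0$ contribution reduces to $\int_0^T f(t)\varphi_x(0,t)\,dt$ because $\varphi(0,t)=0$ and $u(0,t)=f(t)$. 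The decisive step is at $x=1$: using $u_{xx}(1,t)=u_1'(t)$ in $\eqref{eq:1}_3$ gives $d\,u_x(1,t)=b\,u_1(t)-a\,u_1'(t)$, while using $\varphi_{xx}(1,t)=-\varphi_1'(t)$ in $\eqref{eq:2}_2$ gives $d\,\varphi_x(1,t)=a\,\varphi_1'(t)+b\,\varphi_1(t)$. These combine to produce the exact time derivative
\begin{equation*}
u_x(1,t)\,\varphi_1(t) - u_1(t)\,\varphi_x(1,t) = -\frac{a}{d}\bigl(u_1\varphi_1\bigr)'(t),
\end{equation*}
whose integral over $(0,T)$ is $-\tfrac{a}{d}\bigl[u_1(T)\varphi_{T,1} - u_{0,1}\varphi_1(0)\bigr]$. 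Recalling that $(V,W)_H=\int_0^1 vw\,dx+\tfrac{a}{d}v_1 w_1$, the terms regroup into the master identity.

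I would then extend the identity to $U_0\in\mathcal{H}$, $f\in L^2(0,T)$, $\Phi_T\in H_1$ by a routine density argument: the well-posedness and continuous dependence from Section \ref{sec2} make both sides continuous in the data, so smooth approximations pass to the limit, and the two implications of the lemma follow as already indicated. The main obstacle is precisely the $x=1$ boundary term. Its collapse into a total time derivative relies on the fact that the adjoint system \eqref{eq:2} carries the Wentzell condition with exactly the sign pattern in the coefficients $(a,b,d)$ that matches the primal condition $\eqref{eq:1}_3$; this matching is what allows $u_x(1,\cdot)\varphi_1 - u_1\varphi_x(1,\cdot)$ to be recognized as $-(a/d)(u_1\varphi_1)'$. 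Without it one would be left with uncontrolled boundary residues in time, and the duality identity would fail to close. The density step is secondary and standard.
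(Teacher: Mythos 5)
Your proposal is correct and follows essentially the same route as the paper: both derive the duality identity $(U(T),\Phi_T)_{\mathcal{H},H_1}-(U(0),\Phi(0))_{\mathcal{H},H_1}=\int_0^T\varphi_x(0,t)f(t)\,dt$ by integration by parts for smooth data (the paper computes $\tfrac{d}{dt}(U,\Phi)_H$ directly, which is the same calculation as your space--time integration with the $x=1$ term recognized as $-\tfrac{a}{d}(u_1\varphi_1)'$), then conclude both implications from the non-degeneracy of the pairing and a density argument.
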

\begin{proof}
By density, we may assume that $U_0\in H$ and $f\in H^1(0,T)$. Let $U$ and $\Phi$ be the corresponding smooth solutions of systems \eqref{eq:1} and \eqref{eq:2}, respectively.
Since
\begin{eqnarray*}
\frac{d}{dt}(U,\Phi )_{H} &=&(U_{t}(\cdot,t),\Phi
(\cdot,t))_{H}+(U(\cdot,t),\Phi_{t}(\cdot,t))_{H},
\end{eqnarray*}
we can calculate the following inner product by using integration by parts
\begin{eqnarray*}
(U_{t},\Phi)_{H} &=&\int_{0}^{1}u_{xx}\varphi dx+\frac{a}{d}\varphi (1,t)\big[-\frac{d}{a}u_x(1,t)+\frac{b}{a}u(1,t)\big]
 \\
&=&\big[\varphi u_{x-}u\varphi _{x}\big]_{0}^{1}+\int_{0}^{1}u\varphi _{xx}dx+\varphi (1,t)\big[-u_x(1,t)+\frac{b}{d}u(1,t)\big].
\end{eqnarray*}
Using boundary conditions, we obtain
\begin{align*}
(U_{t},\Phi)_{H} &=\int_{0}^{1}u\varphi _{xx}dx+\varphi
(1,t)u_{x}(1,t)-\varphi (0,t)u_{x}(0,t)-u(1,t)\varphi _{x}(1,t) \\
& +u(0,t)\varphi_{x}(0,t)+ \left(\frac{b}{d}u(1,t)-u_{x}(1,t)\right)\varphi (1,t) \\
&\hspace{-1cm}=\int_{0}^{1}u\varphi _{xx}dx+\frac{b}{d}u(1,t)\varphi (1,t)-u(1,t)\varphi
_{x}(1,t)+u(0,t)\varphi_{x}(0,t)-\varphi (0,t)u_{x}(0,t) \\
&\hspace{-1cm}=\int_{0}^{1}u\varphi_{xx}dx+u(1,t)\left(\frac{b}{d}\varphi (1,t)-\varphi_{x}(1,t)\right)+u(0,t)\varphi_{x}(0,t)-\varphi (0,t)u_{x}(0,t).
\end{align*}
Since $\varphi (0,t)=0$ and $\frac{b}{d}\varphi (1,t)-\varphi _{x}(1,t)=\frac{a}{d}\varphi _{xx}(1,t),$ then 
\begin{eqnarray*}
(U_{t},\Phi)_{H} &=&\int_{0}^{1}u\varphi_{xx}dx+\frac{a}{d}\varphi
_{xx}(1,t)u(1,t)+f(t)\varphi _{x}(0,t) \\
&=&-(U,\Phi_{t})_{H}+f(t)\varphi_{x}(0,t).
\end{eqnarray*}
Therefore,
\begin{equation*}
\frac{d}{dt}(U,\Phi )_{H} =-(U,\Phi_{t})_{H}+f(t)\varphi_{x}(0,t)+(U(x,t),\Phi_{t}(x,t))_{H}.
\end{equation*}
Then we obtain,
\begin{equation*}
\frac{d}{dt}(U,\Phi)_{H}=f(t)\varphi_{x}(0,t).
\end{equation*}%
If we integrate this equality from $0$ to $T$, we obtain 
\begin{equation}\label{eq:3}
(U(\cdot,T),\Phi_T)_{H}-(U(\cdot,0),\Phi(\cdot,0))_{H}=\int_{0}^{T}\varphi_{x}(0,t)f(t)dt.
\end{equation}
If the identity \eqref{eq:6} holds, then $(U(\cdot,T),\Phi_T)_{H}=0$ is satisfied for all 
$\Phi_T \in H_1$, which implies that $U(\cdot,T)=0$ in $\mathcal{H}$. We assume in the opposite direction that the system is controllable. As a result $(U(\cdot,T),\Phi_T)_{H}=0$. By substituting this value into \eqref{eq:3}, we obtain the identity \eqref{eq:6}. This completes the proof.
\end{proof}


To reduce the null controllability into an equivalent moment problem, we need to analyze the spectral properties associated with System \eqref{eq:2}. For this reason, we will use eigenvalues and eigenfunctions of the following auxiliary spectral problem
\begin{equation}\label{eq:4}
	\begin{cases}
	&\hspace{-0.4cm} y^{\prime \prime}+\lambda y=0,\\
	&\hspace{-0.4cm} y(0)=0,\\
	&\hspace{-0.4cm} (a \lambda+b)y(1)=dy^{\prime}(1).
\end{cases}
\end{equation}
It is widely acknowledged that the primary difficulty in using the Fourier method lies in its basisness, which requires expanding solutions in terms of the eigenfunctions of the auxiliary spectral problem. Unlike the classical Sturm-Liouville problem, the problem at hand, which has a spectral parameter in the boundary condition, cannot be solved using the conventional methods of eigenfunction expansion, as outlined in works such as Tichmarsh \cite{Tich62} and Naimark \cite{Nai68}.

The spectral problem \eqref{eq:4} with $ad>0$ is examined in \cite{Kerimov:2015}. It is evident from both \cite{Kerimov:2015} and the analysis of Section \ref{sec2} that the eigenvalues, denoted as $\lambda_n$, $n=0,1,2, \ldots$, are real distinct and form an unbounded increasing sequence
$$\lambda_0 <\lambda_1<\ldots <\lambda_n<\ldots.$$
Furthermore, the corresponding eigenfunction $y_n$ associated with each eigenvalue $\lambda_n$ has exactly $n$ simple zeros in the interval $(0,1).$

Let $\mu_n$ be simple positive roots of the characteristic equation
\begin{equation}\label{car}
\left(\frac{a}{d} \mu^2+\frac{b}{d}\right) \sin \mu=\mu \cos \mu.
\end{equation}
Then, the positive eigenvalues are given by $\lambda_n=\mu_n^2.$
The arrangement of all eigenvalues with respect to $0$ depends on the parameter $\dfrac{b}{d}$ as follows
\begin{equation}\label{eq:5}
\begin{gathered}
\text{ If } \frac{b}{d}>1:\quad \lambda_0<0<\lambda_1<\lambda_2<\cdots;\\
\text{ If } \frac{b}{d}=1:\quad \lambda_0=0<\lambda_1<\lambda_2<\cdots; \\
\text{ If } \frac{b}{d}<1:\quad 0<\lambda_0<\lambda_1<\lambda_2<\cdots.
\end{gathered}
\end{equation}
For the case $\dfrac{b}{d}>1$, the eigenfunctions are given by the functions $y_0(x)=e^{\mu_0 x}-e^{-\mu_0 x},\; y_n(x)=\sin \left(\mu_n x\right)$, $\; n=1,2, \ldots$, which correspond to the eigenvalues $\lambda_0=-\mu_0^2,\; \lambda_n=\mu_n^2,\; n=1,2, \ldots,$ respectively. Moreover, it holds that $\pi n<\mu_n<\dfrac{\pi}{2}+\pi n$.

For the case $\dfrac{b}{d}=1$, the eigenfunctions are given by the functions $y_0(x)=x,\; y_n(x)=\sin \left(\mu_n x\right),\; n=$ $1,2, \ldots$, which correspond to the eigenvalues $\lambda_0=0, \;\lambda_n=\mu_n^2,\; n=1,2, \ldots$ Additionally, it holds that $\pi n<\mu_n<\dfrac{\pi}{2}+\pi n$.

For the case $\dfrac{b}{d}<1$, the eigenfunctions are given by $y_n(x)=\sin \left(\mu_n x\right),\; n=0,1,2, \ldots$, which correspond to the eigenvalues $\lambda_n=\mu_n^2, n=0,1,2, \ldots$ Furthermore, it holds that $\pi n<\mu_n<\dfrac{\pi}{2}+\pi n$ if $0 \leq \dfrac{b}{d}<1$ and $\pi n<\mu_n<\pi+\pi n$ if $\dfrac{b}{d}<0$.
By performing a comprehensive analysis of the characteristic equation, we can deduce the following  asymptotic behavior of the eigenfunctions and eigenvalues:
$$
\begin{aligned}
\mu_n & =\pi n+\frac{d}{a \pi n}+O\left(\frac{1}{n^3}\right), \\
y_n(x) & =\sin (\pi n x)+\left[\frac{d}{a \pi n} \cos (\pi n x)+O\left(\frac{1}{n^3}\right)\right] x.
\end{aligned}
$$
Therefore, the eigenelements of $\mathcal{A}_{-1}$ are 
$$Y_n:=\begin{pmatrix}
    y_n\\
    y_n(1)
\end{pmatrix}$$
    and satisfy the orthogonality relation $(Y_n,Y_m)_H=0 \text{ for } n\neq m.$
$$
$$
Since $-\mathcal{A}$ is self-adjoint and the eigenvalues $\lambda_n$ are distinct, then $Y_n$ are orthogonal in $H$ and $$Z_n:=(z_n,z_n(1))=\dfrac{Y_n}{\|Y_n\|_H}, \; n\ge 0,$$ 
form an orthonormal basis of $H$ of eigenfunctions of $-\mathcal{A}$. Moreover, $\{Z_n\}$ forms an orthogonal basis of $\mathcal{H}$ when \(\dfrac{b}{d}<1\).

To derive the solution $\Phi(x,t)$ of the problem \eqref{eq:2}, Theorem 1 in \cite{Fulton_1977} was used. According to this approach, the solution $\Phi(x,t)$ has the following form:
\begin{equation}\label{eq:9}
\Phi(x,t)=\sum_{n=0 }^{\infty}\beta_ne^{-\lambda_n(T-t)}Z_n(x), 
\end{equation}
where $\beta_n=(\Phi_T,Z_n)_H$ for $n=0,1,2,\ldots.$ In particular,
$$\varphi(x,t)=\displaystyle\sum_{n=0 }^{\infty}\beta_n e^{-\lambda_n(T-t)}z_n(x)$$
for $n=0,1,2,\ldots.$

In what follows, to state the boundary null controllability, we will focus on the case where \(\dfrac{b}{d}<1\), as indicated by the discussion in \eqref{eq:5}, where all eigenvalues are positive (see Remark \ref{rmkg}). Therefore, we can conclude the following result.


\begin{theorem}\label{thm:1} System \eqref{eq:1} is boundary null controllable in $\mathcal{H}$ in time $T>0$ if and only if for any $
	U_{0}\in \mathcal{H}$ with Fourier expansion
$U_{0}=\displaystyle\sum_{n=0 }^{\infty}\eta_n Z_n,$
	there exists a function $\theta \in L^2(0,T)$ such that
	\begin{equation}\label{eq:7}
		\int_{0}^{T} \theta(t)e^{-\lambda _{n}t}dt=-\frac{\eta_{n}e^{-\lambda _{n}T}\|Y_n\|_H}{\mu_n},\qquad
	\end{equation}
	where $\lambda_n=\mu_n^2$ and $\eta_n=(U_0,Z_n)_{\mathcal{H},H_1}$ for $n=0,1,2, \ldots.$
\end{theorem}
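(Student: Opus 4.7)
The plan is to apply Lemma \ref{lmm:1} and then make the abstract pairing \eqref{eq:6} concrete by decomposing the test function $\Phi_T$ in the eigenbasis $\{Z_n\}$ and using the explicit spectral representation \eqref{eq:9} of $\Phi$. The two directions of the equivalence then reduce to substitution plus a density argument.

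For the \emph{necessity} direction, I would take $\Phi_T = Z_n$ in Lemma \ref{lmm:1}; this is admissible since $y_n(x) = \sin(\mu_n x)$ is smooth and hence $Z_n \in H_1$. The expansion \eqref{eq:9} then collapses to a single mode, $\Phi(x,t) = e^{-\lambda_n(T-t)} Z_n(x)$, so that $(U_0, \Phi(0))_{\mathcal{H}, H_1} = e^{-\lambda_n T} \eta_n$, while the boundary trace reads $\varphi_x(0,t) = z_n'(0)\, e^{-\lambda_n(T-t)}$. A direct calculation in the regime $b/d < 1$ gives $y_n'(0) = \mu_n$ and hence $z_n'(0) = \mu_n/\|Y_n\|_H$. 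Substituting into \eqref{eq:6} and performing the change of variable $\theta(t) = f(T-t)$ produces exactly the moment relation \eqref{eq:7}.

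For \emph{sufficiency}, I would start from a solution $\theta \in L^2(0,T)$ of \eqref{eq:7}, set $f(t) = \theta(T-t)$, and run the above computation backwards to show that \eqref{eq:6} holds whenever $\Phi_T = Z_n$. The remaining task is to promote this countable family of identities to all $\Phi_T \in H_1$: writing $\Phi_T = \sum_n \beta_n Z_n$, linearity and continuity of both sides of \eqref{eq:6} in $\Phi_T$ allow passing to the limit, after which a final appeal to Lemma \ref{lmm:1} concludes the controllability.

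The main obstacle is precisely this density step. One must justify that the two functionals appearing in \eqref{eq:6}, namely $\Phi_T \mapsto (U_0, \Phi(0))_{\mathcal{H}, H_1}$ and $\Phi_T \mapsto \int_0^T \varphi_x(0,t) f(t)\, dt$, are continuous in a topology in which finite eigenfunction expansions are dense in $H_1$. The first is handled by the orthogonal basis property of $\{Z_n\}$ in $\mathcal{H}$ recorded before the theorem in the regime $b/d<1$, together with the strongly decaying factors $e^{-\lambda_n(T-t)}$ that control the tails of the series \eqref{eq:9}. The second is controlled by the admissibility of the control operator $\mathcal{B}$ established in Section \ref{sec2}, which dualizes to an $L^2$-bound on the trace $\varphi_x(0,\cdot)$ in terms of the data $\Phi_T$. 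These two ingredients together close the equivalence.
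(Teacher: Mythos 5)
Your proposal is correct and follows essentially the same route as the paper: apply Lemma \ref{lmm:1}, test \eqref{eq:6} against $\Phi_T=Z_m$ using the expansion \eqref{eq:9} and the fact that $z_n'(0)=\mu_n/\|Y_n\|_H$, and change variables via $\theta(t)=f(T-t)$. The only difference is that you spell out the density/continuity argument needed to pass from the countable family $\{Z_m\}$ back to all of $H_1$ (via admissibility of $\mathcal{B}^*$ and the basis property of $\{Z_n\}$), a step the paper's proof simply asserts with ``Equation \eqref{eq:6} holds if and only if it holds for $\Phi_m^T=Z_m$.''
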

\begin{proof}
By Lemma \ref{lmm:1}, System \eqref{eq:1} is null controllable with a control $f(t)$ if and only if Equation \eqref{eq:6} holds. Thus, by substituting the values of $U_{0}$ and $\Phi$ into Equation \eqref{eq:6}, we obtain
$$-\sum_{n=0}^{\infty}\eta_n\beta_ne^{-\lambda_nT}=\int_{0}^{T}f(t)\sum_{n=0}^{\infty}\beta_n\dfrac{\mu_n}{\|Y_n\|_H}e^{-\lambda_n(T-t)}dt.$$
Equation \eqref{eq:6} holds if and only if it holds for $\Phi_m^T=Z_m, m=0,1,2,\ldots.$ In this case, $\beta_n=\delta_{n,m},\; m,n=0,1,2.\ldots,$ and 
$$\int_{0}^{T}f(t)\dfrac{\mu_n}{\|Y_n\|_H}e^{-\lambda_n(T-t)}dt=-\eta_n e^{-\lambda_nT}$$ for $n=0,1,2, \ldots.$ By replacing $T-t$ by $t$ in the last integral and defining $\theta(t):=f(T-t)$, the proof is completed.
\end{proof}

Note that the control function, denoted as $f(t),$ is determined by choosing the function $\theta(t)$ that satisfies Equation \eqref{eq:7}. This is a moment problem in $L^2(0,T)$ with respect to the family $\Lambda=\{e^{-\lambda_mt}\}_{m\geq0}$. Suppose that we can
construct a sequence $\{\Theta_n\}_{n\geq0}$ of functions biorthogonal to the set $%
\Lambda $ in $L^2(0,T)$ such that
\begin{equation*}
	\int_{0}^{T}e^{-\lambda_mt}\Theta_n(t)dt=\delta_{n,m}
\end{equation*}
for all $m,n=0,1,2\ldots.$ Then the moment problem \eqref{eq:7} has a (formal) solution by setting
\begin{equation}\label{eq:8}
	\theta(t)=\sum_{n=0}^{\infty} -\frac{\eta_{n}e^{-\lambda _{n}T}\|Y_n\|_H}{\mu_n}\Theta_{n}(t).
\end{equation}
From \cite{Kerimov:2015}, it is established that a detailed examination of the characteristic equation \eqref{car} enables us to determine the following asymptotic behavior of the (square roots of) eigenvalues
\begin{equation}\label{asy}
    \mu_n=\pi n+\frac{d}{a \pi n}+O\left(\frac{1}{n^3}\right).
\end{equation}
Now the existence of the biorthogonal sequence $\{\Theta_n\}_{n\geq0}$ is guaranteed by Müntz theorem, see e.g. \cite[p.~54]{Sc43}. Indeed, since by equality \eqref{asy} we have $\displaystyle\sum_{n=0 }^{\infty} \frac{1}{\lambda_n} <\infty$, $\lambda_n=\mu_n^2,$ Müntz theorem implies that $\Lambda$ is not total in $L^2(0,T)$. Then it admits a biorthogonal sequence $\{\Theta_n\}_{n\geq0}$.

To determine a solution of \eqref{eq:7}, we need to show the convergence of the series presented in Equation \eqref{eq:8}. To this end, we provide the following remark.
\begin{remark}
From \eqref{asy}, it is easy to show that the sequence
$$ \{\lambda_n=\mu_n^2, n=0,1,2\ldots\}$$
	satisfies the conditions of Theorem 1.1 and Theorem 1.5 in \cite{Fattorini:74}. Hence, for any $\epsilon > 0$ there exists a positive constant $K_\epsilon$ such that 
	$$||\Theta_n||_{L^2(0,T)} < K_\epsilon e^{\epsilon\lambda_n},\; n = 0, 1,\ldots.$$
 This estimate allows one to prove (as shown in \cite{AmmarKhodja:2017} or \cite{Boyer:2020}) that the biorthogonal series \eqref{eq:8} converges absolutely, so the moment problem defined in Theorem \ref{thm:1} has a solution $\theta\in L^2(0, T)$.
\end{remark}
Therefore, we deduce the following result.
\begin{proposition}\label{pctr}
For any initial datum $U_0\in \mathcal{H}$ with Fourier expansion $\displaystyle U_{0}=\sum\limits_{n=0 }^{\infty}\eta_n Z_n,$ there exists a null control $f \in L^2(0, T)$ of System \eqref{eq:1} given by
$$f(t)=\sum_{n=0 }^{\infty} -\frac{\eta_{n}e^{-\lambda _{n}T}\|Y_n\|_H}{\mu_n}\Theta_{n}(T-t).$$
\end{proposition}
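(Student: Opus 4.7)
The plan is to derive this as a direct consequence of Theorem \ref{thm:1} combined with the biorthogonal family $\{\Theta_n\}_{n\geq 0}$ constructed via Müntz's theorem in the preceding remark. First I would define the candidate control by
$$f(t) := -\sum_{n=0}^{\infty}\frac{\eta_n e^{-\lambda_n T}\|Y_n\|_H}{\mu_n}\,\Theta_n(T-t),$$
and then reduce the statement to verifying (i) that this series converges in $L^2(0,T)$, and (ii) that $\theta(t):=f(T-t)$ solves the moment identity \eqref{eq:7}, so that Theorem \ref{thm:1} applies.

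For (i), I would fix $\epsilon\in(0,T)$ and invoke the bound $\|\Theta_n\|_{L^2(0,T)}\leq K_\epsilon e^{\epsilon\lambda_n}$ from the preceding remark. Combined with the asymptotic $\mu_n\sim\pi n$ and a Parseval-type control of $|\eta_n|$ by $\|U_0\|_{\mathcal{H}}$ (possibly allowing polynomial growth from the negative-order duality $(U_0,Z_n)_{\mathcal{H},H_1}$ and from $\|Y_n\|_H$), the norm of the $n$-th term is dominated by a constant times $e^{-(T-\epsilon)\lambda_n}$ up to polynomial factors. Since $\lambda_n\sim\pi^2 n^2$, this factor provides super-exponential (Gaussian-type) decay in $n$, overwhelming any polynomial blow-up and yielding absolute convergence in $L^2(0,T)$. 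For (ii), absolute convergence justifies exchanging integration with summation, so
$$\int_0^T e^{-\lambda_m t}\theta(t)\,dt=-\sum_{n=0}^\infty\frac{\eta_n e^{-\lambda_n T}\|Y_n\|_H}{\mu_n}\int_0^T e^{-\lambda_m t}\Theta_n(t)\,dt=-\frac{\eta_m e^{-\lambda_m T}\|Y_m\|_H}{\mu_m},$$
by biorthogonality, which is exactly \eqref{eq:7}. Theorem \ref{thm:1} then delivers that $f$ is a null control.

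The main technical hurdle is step (i): the estimate on $\|\Theta_n\|_{L^2(0,T)}$ grows like $e^{\epsilon\lambda_n}$ for every $\epsilon>0$, so one cannot directly set $\epsilon=T$. The crucial observation is that any choice $\epsilon<T$ leaves a strictly positive exponential gap $e^{-(T-\epsilon)\lambda_n}$, and because $\lambda_n$ grows quadratically in $n$, this gap alone is strong enough to absorb all polynomial contributions of $|\eta_n|\,\|Y_n\|_H/\mu_n$. Everything else in the argument — the termwise integration, the biorthogonality reduction, and the final application of Theorem \ref{thm:1} — is mechanical once this summability has been secured.
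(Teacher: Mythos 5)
Your proposal is correct and follows essentially the same route as the paper: reduce to the moment problem of Theorem \ref{thm:1}, use the Müntz-type biorthogonal family $\{\Theta_n\}$ with the bound $\|\Theta_n\|_{L^2(0,T)}\le K_\epsilon e^{\epsilon\lambda_n}$, and exploit the gap $e^{-(T-\epsilon)\lambda_n}$ together with $\lambda_n\sim\pi^2n^2$ to get absolute convergence of the series and justify termwise integration. The paper leaves the convergence details to the cited references, whereas you spell them out (including the at-most-polynomial growth of $|\eta_n|\|Y_n\|_H/\mu_n$), which is a faithful completion of the same argument rather than a different one.
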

\begin{remark}\label{rmkg}
We assumed at the outset that the eigenvalues include only positive real numbers $\lambda_n>0$ (that is, $\dfrac{b}{d}<1$). However, some of the eigenvalues of \eqref{eq:4} may be negative or null in the cases where $\dfrac{b}{d}\geq 1$. In such cases, we still have similar boundary null controllability results. Indeed, by setting 
$$\theta(t)=e^{-\lambda t}\tilde{\theta}(t)$$
Equation \eqref{eq:7} becomes $$\int_0^Te^{-(\lambda_n+\lambda)t}\tilde{\theta}(t)dt=-\frac{\eta_{n}e^{-\lambda _{n}T}\|Y_n\|_H}{\mu_n}$$ and $$\lambda_n+\lambda>0$$ for all $n$ if $\lambda$ is chosen so that $\lambda>-\lambda_0$.
\end{remark}

The theoretical controllability results presented above are obtained by the moment method which draws on the the existence of a biorthogonal sequence associated with the exponential family of eigenvalues $\Lambda$. The construction of such a sequence is quite complicated; this is why we will consider an alternative numerical approach.

\section{Numerical algorithm}\label{sec5}
In this section, we present a numerical algorithm for computing the approximate boundary controls of minimal $L^2$-norm by the penalized HUM approach combined with a CG scheme. We refer to \cite{GL'08} for the details. Note that the numerical analysis of the boundary controllability problem we consider here is delicate and the convergence analysis is not covered by the abstract framework in \cite{Bo13}.

\subsection{The HUM controls}
Next, we will design an algorithm inspired by \cite[Chapter 2]{GL'08} for Dirichlet boundary conditions.

Let us assume without loss of generality that the target function $U_T:=(u_T,u_{T,1})=(0,0)$ (for simplicity), otherwise we can consider any $U_T\in \mathcal{H}$. Let $\varepsilon>0$ be a fixed penalization parameter. We define the cost functional $J_{\varepsilon}: L^2(0,T) \longrightarrow \mathbb{R}$ to be minimized on $L^2(0,T)$ as follows
\begin{equation*}
J_{\varepsilon}\left(f\right)=\frac{1}{2}\int_0^T |f(t)|^2 dt +\frac{1}{2\varepsilon}\left\|U(\cdot, T)\right\|^2_{\mathcal{H}},
\end{equation*}
where $U$ is the solution of \eqref{eq:1} corresponding to the control $f$. Note that $J_\varepsilon$ is strictly convex, continuous and coercive.
\begin{remark}
In what follows, we only consider HUM approximate controls $f_\varepsilon$. We expect that HUM null controls can be obtained by carefully studying the limit of $f_\varepsilon$ as $\varepsilon\to 0$ in $L^2(0,T)$. We refer to \cite{Bo13} for the distributed control case.
\end{remark}
Calculating the increments of $J_\varepsilon$ by the adjoint methodology, we show that the unique minimizer $f:=f_\varepsilon \in L^2(0, T)$ of $J_{\varepsilon}$ is characterized by the following Euler-Lagrange equation
\begin{equation}\label{Eq2}
(J'_\varepsilon(f),g)_{L^2(0,T)}=\int_0^T (f(t)+p_x(0,t))g dt = 0 \qquad \forall f,g \in L^2(0,T),
\end{equation}
where $p$ is the solution of 
\begin{empheq}[left = \empheqlbrace]{alignat=2}
\begin{aligned}
&p_{t}+p_{xx}=0 \quad \text{in } D_T, \\ 
&a p_{xx}(1,t)-bp(1,t)+d p_{x}(1,t)=0, \\ 
&p(0,t)=0, \\ 
&p(x,T)=h(x) \quad \text{ in }\Omega_1, \label{opt}
\end{aligned}
\end{empheq}
with $h$ solves
\begin{equation}
    \begin{cases}h_{xx}=\alpha h - \varepsilon^{-1} u(x,T) & \text { in } \Omega, \\ a h_{xx}(1)-b h(1)+d h_x(1)=0, &  \\ h(0)=0. & \end{cases}
\end{equation}
The optimality condition \eqref{Eq2} implies that the optimal control is given by
$$f(t)=-p_x(0,t) \qquad \forall t\in (0,T).$$

We introduce the continuous operator $\Gamma: H_1 \to \mathcal{H}$
given by $\Gamma V=-Y(\cdot, T)$ for all $V\in H_1$, where $Y(\cdot, T)$ is defined as follows: we first solve the problem
\begin{empheq}[left = \empheqlbrace]{alignat=2}
\begin{aligned}
&p_{t}+p_{xx}=0 \quad \text{in } D_T, \\ 
&a p_{xx}(1,t)-bp(1,t)+d p_{x}(1,t)=0, \\ 
&p(0,t)=0, \\ 
&p(x,T)=v(x)\; \text{ in }\Omega, \; p(1,T)=v_1, \label{solp}
\end{aligned}
\end{empheq}
and then we solve
\begin{empheq}[left = \empheqlbrace]{alignat=2}
\begin{aligned}
&y_{t}-y_{xx}=0 \quad \text{in } D_T, \\ 
&a y_{xx}(1,t)-by(1,t)+d y_{x}(1,t)=0, \\ 
&y(0,t)=-p_x(0,t), \\ 
&y(x,0)=u_0(x) \; \text{ in }\Omega,\; y(1,0)=u_{0,1}. \notag
\end{aligned}
\end{empheq}
A simple calculation using \eqref{eq:3} yields
$$(\Gamma V_1,V_2)_H=\int_0^T p^1_x(0,t)p^2_x(0,t) dt \qquad \forall V_1,V_2\in H_1,$$
with $p^k$ is the solution to \eqref{solp} corresponding to $V_k$, $k=1,2$. Then we can deduce that $\Gamma$ is self-adjoint and positive semi-definite. Let $\alpha\in \rho(\mathcal{A}_{-1})$; e.g., we can choose $\alpha=0$ if $\dfrac{b}{d}\neq 1$ and $\alpha=-1$ if $\dfrac{b}{d}= 1$ to guarantee that the following system admits a unique solution. Let $G:=(g,g_1)\in H_1$ be the unique solution of the elliptic problem
$$\begin{cases}g_{xx}=\alpha g - \varepsilon^{-1} y(x,T) & \text { in } \Omega, \\ a g_{xx}(1)-bg(1)+d g_x(1)=0, &  \\ g(0)=0. & \end{cases}$$
Therefore, $G\in H_1$ is the unique solution of
$$\varepsilon(\alpha -\mathcal{A}_{-1})G +\Gamma G=0\qquad \text{in } \mathcal{H}.
$$
This (dual) problem can be solved by a conjugate gradient
method as follows.
\smallskip

\noindent\rule{\textwidth}{1pt}
\textbf{Algorithm 1:} HUM with CG Algorithm

\noindent\rule{\textwidth}{1pt}
\textbf{1} Set $k=0$ and choose an initial guess $V^0 \in H_2$.\\
\textbf{2} Solve the problem
\begin{empheq}[left = \empheqlbrace]{alignat=2} \label{adj}
\begin{aligned}
&p^0_{t}+p^0 _{xx}=0 \quad \text{in } D_T, \\ 
&a p^0_{xx}(1,t)-b p^0(1,t)+d p^0_{x}(1,t)=0, \\ 
&p^0(0,t)=0, \\ 
&p^0(x,T)=v^0(x), \; p^0(1,T)=v^0_1
\end{aligned}
\end{empheq}
and set $f^0(t)=-p^0_x(0,t)$. \\ 
\textbf{3} Solve the problem
\begin{empheq}[left = \empheqlbrace]{alignat=2}
\begin{aligned}
&y^0_{t}-y^0_{xx}=0 \quad \text{in } D_T, \\ 
&a y^0_{xx}(1,t)-b y^0(1,t)+d y^0_{x}(1,t)=0, \\ 
&y^0(0,t)=f^0(t), \\ 
&y^0(x,0)=u_0(x), \; y^0(1,0)=u_{0,1},\notag \; 
\end{aligned}
\end{empheq}
then solve 
\begin{empheq}[left = \empheqlbrace]{alignat=2}
\begin{aligned}
&g^0_{xx}-\alpha g^0=\varepsilon \left(v^0_{xx}-\alpha v^0\right)+y^0(T,x) \quad \text{in } \Omega, \\ 
&a g^0_{xx}(1)-b g^0(1)+d g^0_{x}(1)=0, \\ 
&g^0(0)=0, \notag
\end{aligned}
\end{empheq}
and set $W^0=G^0$.\\
\textbf{4} For $k=1,2,\ldots,$ until convergence,
solve the problem
\begin{empheq}[left = \empheqlbrace]{alignat=2}
\begin{aligned}
&p^k_{t}+p^k_{xx}=0 \quad \text{in } D_T, \\ 
&a p^k_{xx}(1,t)-b p^k (1,t)+d p^k_{x}(1,t)=0, \\ 
&p^k(0,t)=0, \\ 
&p^k(x,T)=w^{k-1}(x), \; p^k(1,T)=w^{k-1}_1 \notag
\end{aligned}
\end{empheq}
and set $f^k(t)=-p^k_x(0,t)$.\\
\textbf{5} Solve the problem
 \begin{empheq}[left = \empheqlbrace]{alignat=2}
\begin{aligned}
&y^k_{t}-y^k_{xx}=0 \quad \text{in } D_T, \\ 
&a y^k_{xx}(1,t)-b y^k(1,t)+d y^k_{x}(1,t)=0, \\ 
&y^k(0,t)=f^k(t), \\ 
&y^k(x,0)=u_0(x), \; y^k(1,0)=u_{0,1},\notag \; 
\end{aligned}
\end{empheq}
then solve
\begin{empheq}[left = \empheqlbrace]{alignat=2}
\begin{aligned}
&\bar{g}^k_{xx}-\alpha \bar{g}^k=\varepsilon \left(w^{k-1}_{xx}-\alpha w^{k-1}\right)+y^k(T,x) \quad \text{in } \Omega, \\ 
&a \bar{g}^k_{xx}(1)-b \bar{g}^k(1)+d \bar{g}^k_{x}(1)=0, \\ 
&\bar{g}^k(0)=0, \notag
\end{aligned}
\end{empheq}
and compute $\rho_k=\dfrac{\|g^{k-1}_x\|^2_H}{(\bar{g}^k_x, w^{k-1}_x)_H},$
then
$$v^k=v^{k-1}-\rho_k w^{k-1} \qquad \text{ and } \qquad g^k=g^{k-1}-\rho_k \bar{g}^k. \;$$
\textbf{If} $\dfrac{\|g^{k}_x\|_H}{\|g^{0}_x\|_H} \le tol$, stop the algorithm, set $v=v^{k}$ and solve \eqref{adj} with $v^0=v$
to get $f(t)=-p^0_x(0,t)$. \newline
\textbf{Else} compute $$\gamma_k=\dfrac{\|g^{k}_x\|^2_H}{\|g^{k-1}_x\|^2_H} \qquad \text{ and then } \qquad w^k=g^{k}+\gamma_k w^{k-1}.$$
\noindent\rule{\textwidth}{1pt}

\subsection{Numerical experiments}
We employ the method of lines to discretize and solve different PDEs in Algorithm 1 along the spatial interval $[0,1]$. We use the uniform grid $x_j=j \Delta x$,  $j=\overline{0,N_x}$, where $\Delta x=\dfrac{1}{N_x}$ as a spatial mesh parameter. Setting $u_j(t):=u(x_j,t)$, we use the standard centered difference approximation of $u_{xx}$:
$$u_{xx}(x_j,t) \approx \frac{u_{j-1}(t)-2 u_j(t) + u_{j+1}(t)}{(\Delta x)^2}, \quad j=\overline{1, N_x-1}.$$
The boundary derivatives can be approximated using backward formulas:
\begin{align*}
    u_x(1,t) &\approx \frac{u_{N_x}(t)-u_{N_x-1}(t)}{\Delta x},\\
    u_{xx}(1,t) &\approx \frac{u_{N_x-2}(t)-2 u_{N_x-1}(t) + u_{N_x}(t)}{(\Delta x)^2}.
\end{align*}
It suffices to solve the resulting system of ordinary differential equations. The method of lines has been performed in the \texttt{Wolfram} language by adopting the function \texttt{NDSolve}. 

In all numerical tests, we take the following values for our computations: $N_x=25$ for the mesh parameter, $T=1$ for the terminal time, and $U_0:=(u_0,u_{0,1})$ given by $u_0(x)=\sqrt{2} \sin(\pi x), \; x \in [0,1],$ $u_{0,1}=0$, 
is the initial datum to be controlled. In Algorithm 1, we take $\varepsilon=10^{-3}$ as penalization parameter. Furthermore, for the parameter $\alpha$, we take $\alpha=0$ if $\dfrac{b}{d}\neq 1$ ($0$ is not an eigenvalue) and $\alpha=-1$ if $\dfrac{b}{d}= 1$ ($-1$ is not an eigenvalue) so that $\alpha\in \rho(\mathcal{A}_{-1})$ and the corresponding systems admit unique solutions. We also take the initial guess as $V^0=0$. To perform a comparative analysis with respect to the parameter $\dfrac{b}{d}$, we fix the number of iterations at $n^\mathrm{iter}=7$ for the plots.

\subsubsection*{\bf (i) Case $\dfrac{b}{d}<1$:}
Here we take $a=b=1$, $d=3$ and $\alpha=0$. We plot the uncontrolled and controlled solutions of $\eqref{eq:1}$.
\begin{figure}[H]
\centering
\begin{minipage}{0.45\textwidth}
\includegraphics[scale=0.4]{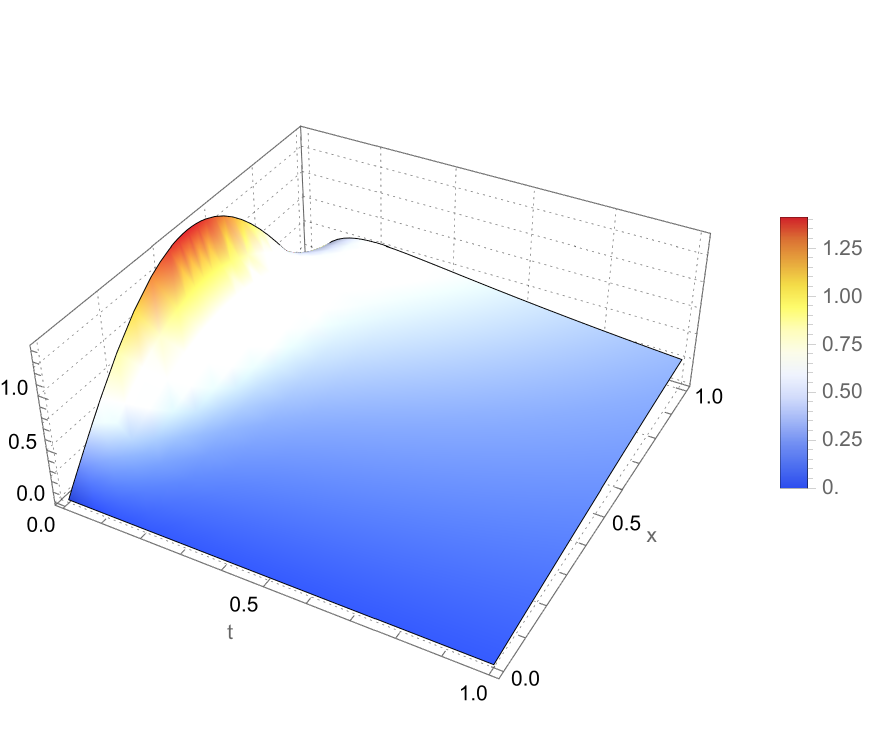}
\end{minipage}\hfill
\begin{minipage}{0.45\textwidth}
\centering
\includegraphics[scale=0.4]{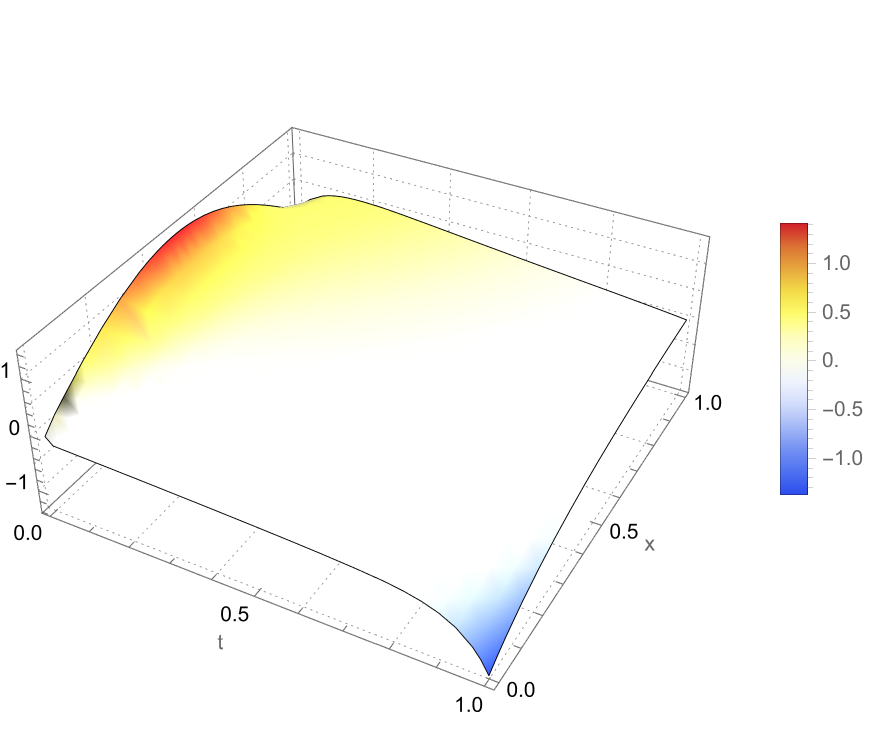}
\end{minipage}
\caption{The uncontrolled solution (left) and the controlled solution (right) in the case $\dfrac{b}{d}<1$.}
\label{fig10}
\end{figure}
Next, we plot the computed control.
\begin{figure}[H]
\centering
\includegraphics[scale=0.4]{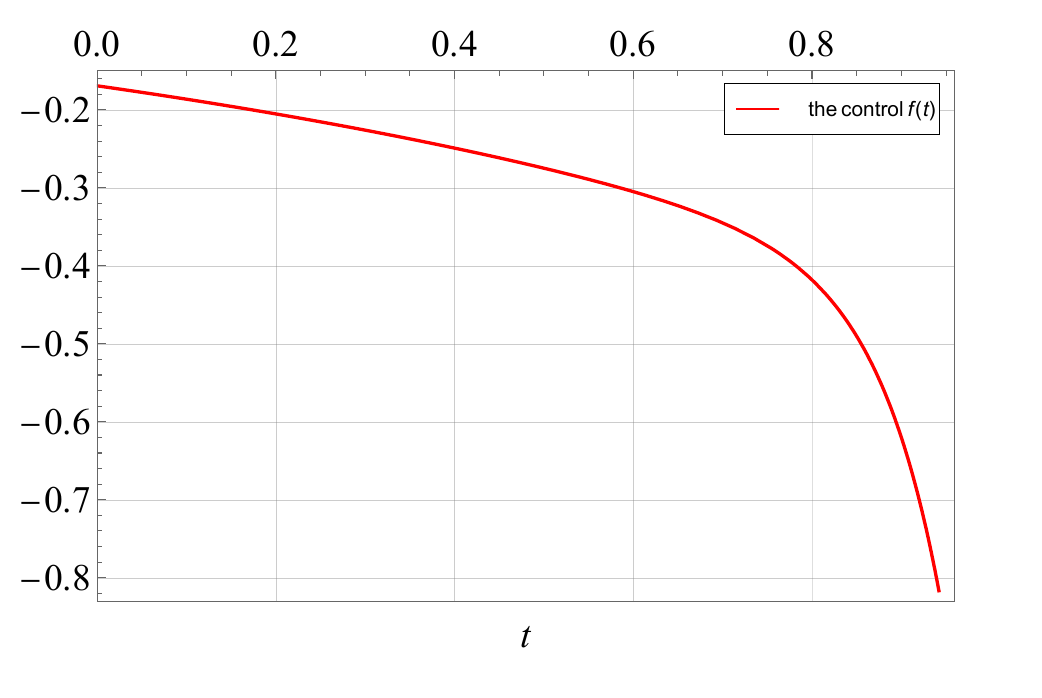}
\caption{The computed control in the case $\dfrac{b}{d}<1$.} \label{fig12}
\end{figure}

\subsubsection*{\bf (ii) Case $\dfrac{b}{d}=1$:}
Here we take $a=b=d=1$ and $\alpha=-1$. We plot the uncontrolled and controlled solutions of $\eqref{eq:1}$.
\begin{figure}[H]
\centering
\begin{minipage}{0.45\textwidth}
\includegraphics[scale=0.4]{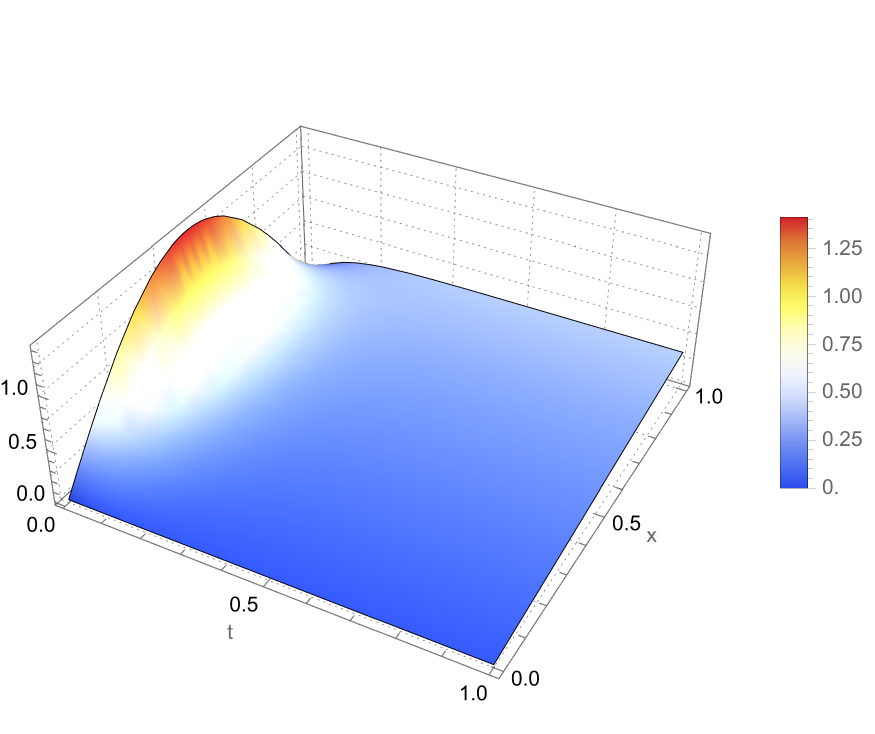}
\end{minipage}\hfill
\begin{minipage}{0.45\textwidth}
\centering
\includegraphics[scale=0.4]{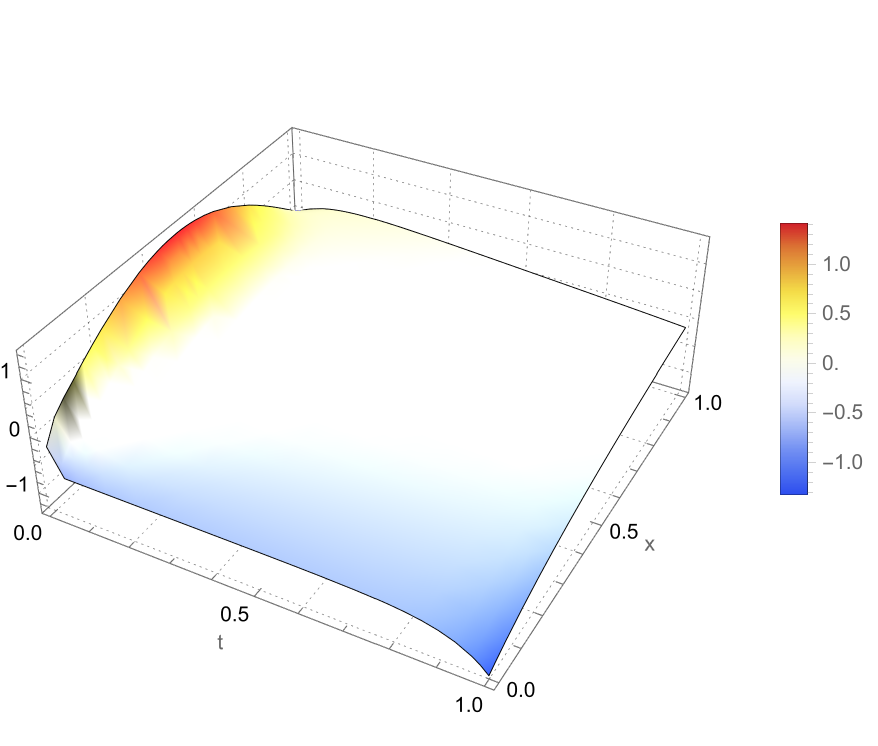}
\end{minipage}
\caption{The uncontrolled solution (left) and the controlled solution (right) in the case $\dfrac{b}{d}=1$.}
\label{fig20}
\end{figure}
Next, we plot the computed control.
\begin{figure}[H]
\centering
\includegraphics[scale=0.4]{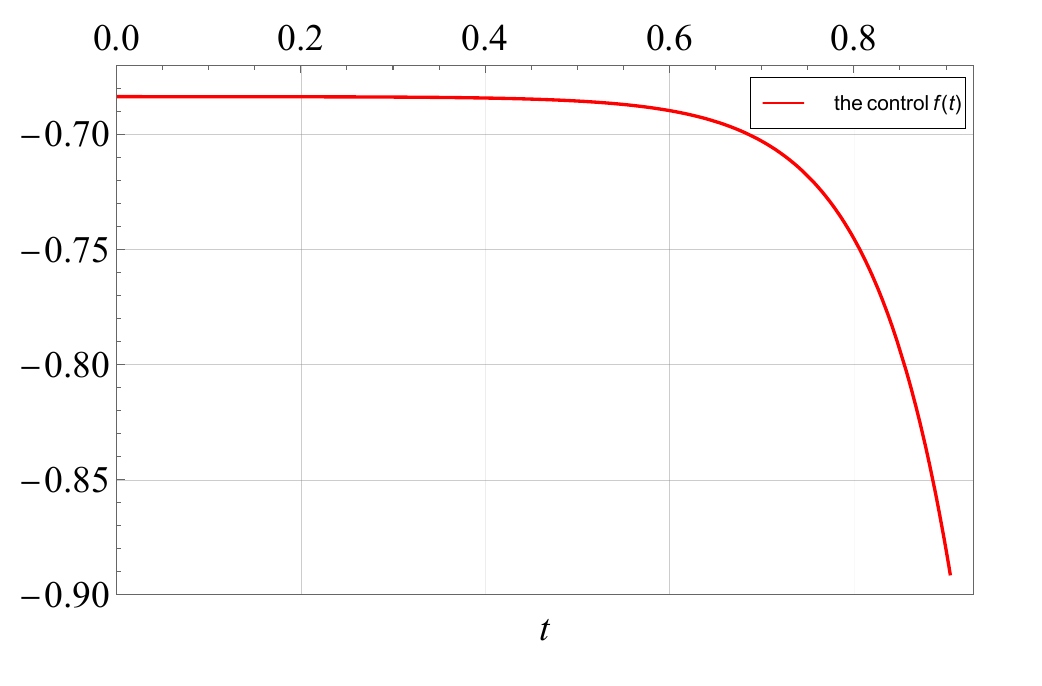}
\caption{The computed control in the case $\dfrac{b}{d}=1$.}\label{fig22}
\end{figure} 

\subsubsection*{\bf (iii) Case $\dfrac{b}{d}>1$:}
Here we take $a=1$, $b=3$, $d=1$ and $\alpha=0$. We plot the uncontrolled and controlled solutions of $\eqref{eq:1}$.
\begin{figure}[H]
\centering
\begin{minipage}{0.45\textwidth}
\includegraphics[scale=0.4]{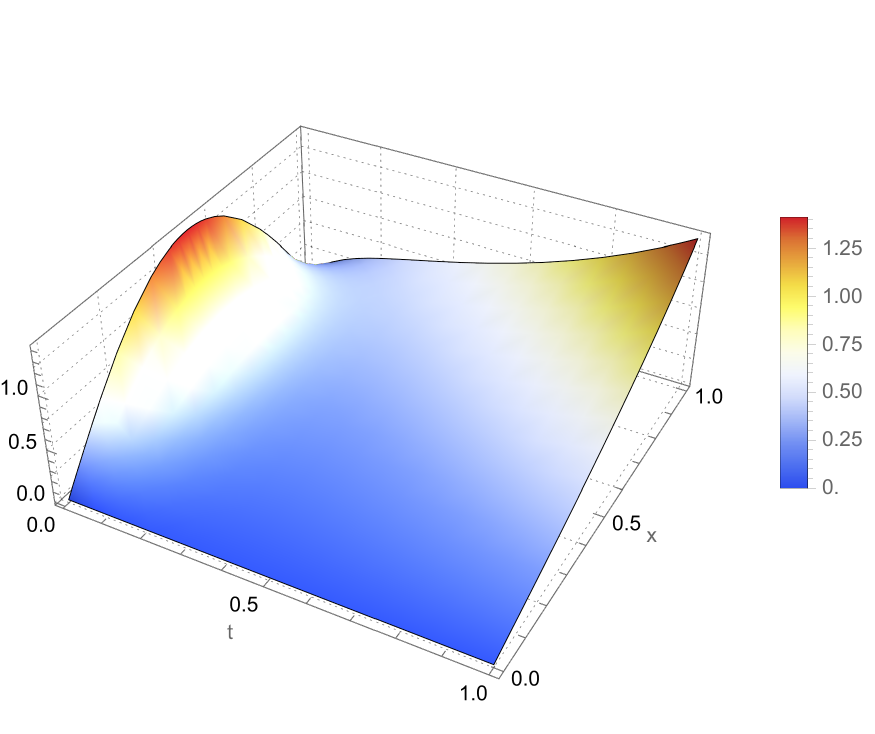}
\end{minipage}\hfill
\begin{minipage}{0.45\textwidth}
\centering
\includegraphics[scale=0.4]{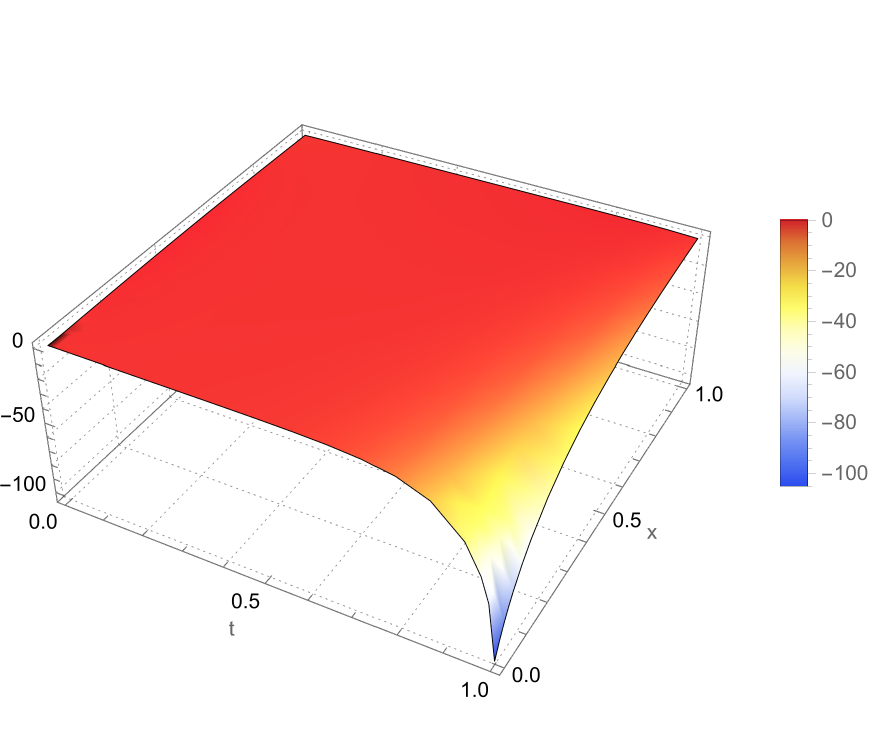}
\end{minipage}
\caption{The uncontrolled solution (left) and the controlled solution (right) in the case $\dfrac{b}{d}>1$.}
\label{fig30}
\end{figure}
Next, we plot the computed control.
\begin{figure}[H]
\centering
\includegraphics[scale=0.4]{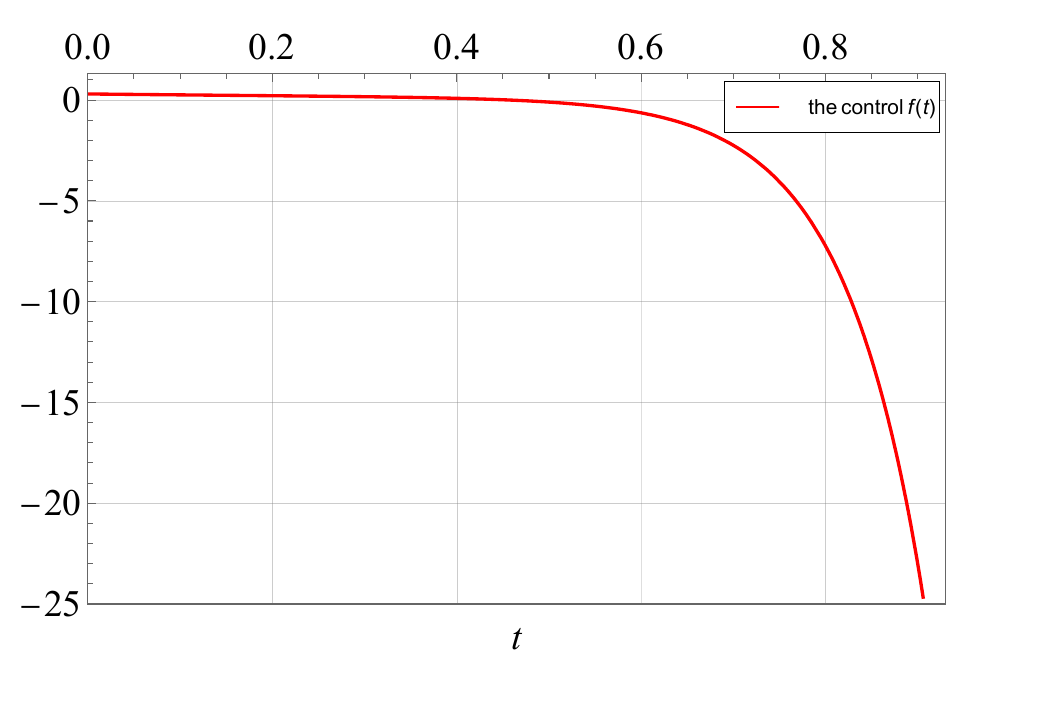}
\caption{The computed control in the case $\dfrac{b}{d}>1$.}\label{fig32}
\end{figure}

The following remarks are in order concerning the above experiments:
\begin{itemize}
    \item From Figures \ref{fig10}, \ref{fig20} and \ref{fig30}, we observe the effect of computed controls on the solutions at time $T=1$. Indeed, we see that the negative values of the solutions are concentrated near the control location $x=0$.
    \item From Figures \ref{fig12}, \ref{fig22} and \ref{fig32}, we see that the calculated controls take negative values, deriving the positive values of the solutions toward zero. We also observe that the calculated control decreases faster in the case $\dfrac{b}{d}<1$ than the case $\dfrac{b}{d}=1$, but with closer minimums. However, the case $\dfrac{b}{d}>1$ requires a much smaller minimum. This is natural when the shape of the solution in each case is taken into account.
    \item Compared to distributed control algorithms, see, e.g. \cite{Ch23}, the boundary control Algorithm 1 is slower and requires more iterations to converge (e.g., thousands of iterations for a tolerance $tol=10^{-3}$). This is due to the difficulty of the boundary control problem where the control acts as a trace on a single point $x=0$, while the distributed control acts on an interval. Moreover, Algorithm 1 contains an extra step at each iteration to solve an elliptic problem. This is not the case for distributed control problems.
\end{itemize}
\section{Conclusion}\label{sec6}
The paper considers the linear heat equation with a Wentzell-type boundary condition and a Dirichlet control. After proving its well-posedness, the control problem is reduced to a moment problem. Then, using the spectral analysis of the associated spectral problem with boundary conditions that include a spectral parameter and applying the moment method, the boundary null controllability of the system at any positive time $T$ is established. Finally, minimum energy controls are approximated by a penalized HUM approach.
The similar Dirichlet control problem for the 2D heat equation in a rectangular domain with a Wentzell boundary condition is a line of future investigation. The associated spectral problem, existence and uniqueness results have been recently studied in \cite{Ismailov:2023}.
\bigskip

\noindent\textbf{Acknowledgement.}
This study was supported by Scientific and Technological Research Council of Turkey (TUBITAK) under the Grant Number 123F231. The authors thank TUBITAK for their support.

\newpage
\bibliographystyle{plain}
\bibliography{dynamic.bib}

\end{document}